\newtheorem{theorem}{Theorem}[section]
 \newtheorem{lemma}[theorem]{Lemma}
 \newtheorem{proposition}[theorem]{Proposition}
 \theoremstyle{definition}
 \newtheorem{definition}[theorem]{Definition}
 \theoremstyle{remark}
  \newtheorem{ex}[theorem]{Example}
 \numberwithin{equation}{section}
\def \bA {\mathbb A}
\def \bC {\mathbb C}
\def \bD {\mathbb D}
\def \bN {\mathbb N}
\def \bR {\mathbb R}
\def \cL {\mathcal L}
\def \cM {\mathcal M}
\def\supp{{{\rm supp}}}
\begin{document}

\title[On Sobolev regularity]
{$\bC$-elliptic operators and $\mathrm{W}^{1,1}$-regularity\\ for linear growth functionals}
\author
{Piotr Wozniak}

\date{}

\newcommand{\Addresses}{{
		\bigskip
		\footnotesize
		
		\textsc{P. Wozniak}, \textsc{Westf\"alische Wilhelms-Universit\"at M\"unster,
			Institut für Analysis und Numerik, 
			Einsteinstraße 62,
			48149 M\"unster, Germany}\par\nopagebreak
		\textit{E-mail address}: \texttt{pwozniak@uni-muenster.de}
		
}}

\begin{abstract}
In this paper we prove the higher Sobolev regularity of minimisers for convex integral functionals evaluated on linear differential operators of order one. This intends to generalise the already existing theory for the cases of full and symmetric gradients to the entire class of $\bC$-elliptic operators therein including the trace-free symmetric gradient for dimension $n \geq 3$. 
\end{abstract}

\maketitle

\tableofcontents

\section{Introduction and an overview of the problem}
\noindent
Many boundary value problems originating from the theory of plasticity that concern kinetic and dynamic states can be usefully interpreted via variational methods. Determining equilibrium condition of elastoplastic bodies or examining the displacement fields are in fact often regarded as minimisations of suitable integral functionals acting on the full or symmetric gradients of the competitor maps. For instance a thorough exposition discussing such formulations from a fluid dynamics perspective is given by M. Fuchs and G. Seregin in \cite{Fuchs2000}. The numerous physical motivations have initiated much interest in studying the regularity theory of minimisation problems considered over the classes of vector fields of bounded variation and bounded deformation. In actuality this has been a very active and fruitful field of calculus of variations in recent years for which we are going to highlight a fraction of related publications on the topic. In this paper we wish take on a concrete approach to elaborate the notion of higher Sobolev regularity for a wide spectrum of elliptic problems within a general framework and thereafter unify a large number of the already present contributions of the kind. Heuristically our analysis shall be built upon a designated setting which we point out below. 
\subsection{Variational problem and differential operators}
Let $n \geq 2$ be an integer and let $\Omega \subset \bR^n$ be a bounded domain i.e. open, connected and with Lipschitz boundary. For a pair of finite dimensional vector spaces $V, W$ let $f:W \rightarrow \bR$ be a convex integrand and moreover take $\bA$ to be a linear differential operator of order one; this shall be understood vectorially as a mapping from $V$ to $W$, that is, 
\begin{equation*}
\bA = \sum_{\alpha=1}^{n} \bA_{\alpha} \partial^{\alpha}.
\end{equation*} 
Here each $\bA_{\alpha} \in \cL(V, W)$ is a fixed linear map. Let us assume in addition the dimensions of $V$ and $W$ to be at least two. Our discourse is going to be broadly devoted to investigate minimisation problems for a selected variety of autonomous integral functionals taking the form 
\begin{equation}\label{Dirprob}
	\mathscr{F}[u] = \int_{\Omega}^{} f(\bA u) \hspace*{0.1cm}\mathrm{d} x, \hspace{1cm} \text{for $u \in \mathscr{D}_{u_0}$}
\end{equation}
where the competitor set $\mathscr{D}_{u_0} := u_0 + \mathrm{W}^{\bA,1}_0(\Omega)$ is a Dirichlet class of a rightly assigned \textit{generalised} Sobolev space and $\bA$ is $\bR$-\textit{elliptic}. The ellipticity is understood by the symbol map $\bA[\xi]$ being injective as a linear map from $V$ to $W$ for all $\xi \in \bR^n \setminus \{0\}$. The collection $\mathrm{W}^{\bA,1}_0(\Omega)$ denotes the closure of $u \in C^{\infty}_c(\bR^n; V)$ under the seminorm $\|\bA(\cdot)\|_{\mathrm{L}^1(\Omega)}$. A more precise definition of such spaces will be given in the due course. The prime example that illustrates the situation is the area functional $\sqrt{1 + |\cdot |^2}$ acting on $\mathscr{D}_{u_0} = \mathrm{W}^{1, 1}_{u_0}(\Omega; \bR^n)$. Interpreted in accord with (\ref{Dirprob}) this gives the non-parametric minimal area problem of hypersurfaces encircled by the graphs of $\mathrm{W}^{1, 1}$-maps, one of the subjects contained in the paper by L. Beck and T. Schmidt \cite{Beck2013}. In short, there the authors focus on studying the minima of a relaxed version of the boundary value problem augmented to the BV setting. Adapting the integral representation of the Lebesgue-Serrin type extension as in \cite{Anzellotti1985, Goffman1964, Giaquinta1979}, the crucial part is devoted to a derivation of $\mathrm{W}^{1,1}$-regularity of the corresponding generalised minimisers. Interestingly for the value $a = 3$ the underlying method relies on proving uniform bounds of $L\log^2 L$ kind on the full gradients of perturbed minimising sequences obtained earlier by an implementation of the Ekeland variational principle. Similarly to the model minimal area example in our specimen we shall demand the integrand $f: W \rightarrow \bR$ to be of \emph{linear growth}. Indeed this very condition gradually makes the entire minimisation problem very complex in nature when compared to higher power growths since it emits two major obstructions for which the classical techniques of calculus of variations appear insufficient to facilitate. Firstly, in contrast with the scenario in which $f$ is of \emph{$p$-growth} for $p > 1$, that is, some constants $\alpha_1, \alpha_2, \beta>0$ it satisfies $\alpha_1|z|^p- \beta \leq f(z) \leq \alpha_2(1 + |z|^p)$ for all $z \in W $ and with the operator $\bA$ being $\bR$-\textit{elliptic}, one may invoke the Korn-type inequalities discussed in Section \ref{Fnsp}:
\begin{equation}
\int_{\Omega}^{} |\nabla u|^p \hspace*{0.1cm}\mathrm{d} x \lesssim \int_{\Omega}^{} |\bA u|^p \hspace*{0.1cm}\mathrm{d} x \hspace*{1cm} \forall u \in C^{\infty}_c(\bR^n; V).
\end{equation}
Therefore one readily concludes that the functional $\mathscr{F}$ is well-defined and coercive on the Dirichlet class $\mathrm{W}^{1, p}_{u_0} (\Omega; V)$, thus making it possible to work from here onwards by employing the direct method. Although, in the present circumstances one cannot in fact propose the class $\mathrm{W}^{1, 1}(\Omega; V)$ in the place of competitors because of the many potential choices of the operator $\bA$. More explicitly the \emph{linear growth} $p=1$ does not always permit such reductions due to the presence of the so called Ornstein's Non-Inequality \cite{Ornstein1962, Kirchheim2011, Kirchheim2016} which precisely says that: If there is a constant $C >0$ such that 
\begin{equation*}
\int_{\Omega}^{} |\nabla u| \hspace*{0.1cm}\mathrm{d} x \leq C \int_{\Omega}^{} |\bA u| \hspace*{0.1cm}\mathrm{d} x \hspace*{1cm} \forall u \in C^{\infty}_c(\bR^n; V),
\end{equation*}
then $\bA$ and $\nabla$ depend linearly as operators. The difficulty with assembling the strategy is clearly evident for the symmetric gradient $\varepsilon=\frac{1}{2}(\nabla+\nabla^{\mathrm{T}})$, see \cite{Gmeineder2016} describing the setting of BD.\\
The other confronting aspect is that the spaces $\mathrm{W}^{\bA, 1}(\Omega)$ are in general not reflexive which in turn implies compactness issues when inspecting the minimising sequences. This can be resolved usually by considering an augmented form of the functional and allow a larger function class of bounded $\bA$-variation denoted $\mathrm{BV}^{\bA}(\Omega)$. Formally it encompasses those $u \in \mathrm{L}^1(\Omega)$ for which the \emph{$\bA$-gradient} $\bA u$ is distributionally portrayed by finite Radon measures. There have been a number of contributions to this type of problems principally for the full gradient case by Bildhauer and already mentioned Beck, Schmidt \cite{Bildhauer2002, Beck2013, Bildhauer2003, Bildhauer2003D} but also have been investigated by Fuchs and Mingione \cite{Bildhauer2003c, Fuchs2000a} for so called \emph{a-elliptic} convex integrands with the $\mu=a$ convention of Bildhauer and Fuchs, see Section \ref{intro}. In brief and holistic terms what is known up to now in the full gradient case thus with adopted BV spaces, is that the generalised minima exist and belong to $\mathrm{W}^{1, p}_{\mathrm{loc}}(\Omega; \bR^n)\cap\mathrm{W}^{1,1}(\Omega; \bR^n)$ regularity for some $p>1$ within ellipticity regimes up to $a<1+\frac{2}{n}$. The $\mathrm{W}^{1,1}$ regularity for $1+\frac{2}{n}\leq a\leq3$ holds true if additional assumption of local boundedness of such minimisers is added \cite{Bildhauer2002}. In some circumstances this may be extracted from maximum principles or Moser-type arguments. Having said that, exceeding the ellipticity values of $a=3$, at least for Dirichlet problems, even the $\mathrm{W}^{1,1}$-regularity is not completely known to hold. As for other operators in view of the Ornstein's Non-inequality the distributional gradients of $\mathrm{BV^{\bA}}(\Omega)$-maps may not exist as finite Radon measures and so the induced function spaces happen to be strictly larger than $\mathrm{BV}(\Omega)$. In consequence this prevents immediate generalisations and demands a entirely separate argument when passing from the full gradient to certain other differential operators. Nevertheless more recently the mirrored conclusions on the Sobolev regularity have been established also for the symmetric gradient with consubstantially utilised BD setting, see late results by Gmeineder and Kristensen \cite{Gmeineder2020BD, Gmeineder2019}. However with that being mentioned, to the author's awareness thus far any generalisations to a larger families of differential operators are incomplete and have not been coherently categorised, which motivates intentions and the theme of this paper. 
\subsection{Main Objective and Strategy} 
Our attempt therefore is to provide a partial classification of higher Sobolev regularity results to a whole class of operators commonly sharing the $\bC$-ellipticity property. This essentially asks for the operator to have a finite-dimensional nullspace, rigorous definition in Section \ref{section2.2}. Interestingly our investigations will reflect the advances for BV and BD and eventually expand the canon of regularity to a number of broadly studied differential operators such as the trace-free symmetric gradient. Amusingly as in those examples any regularity results largely depend on the elliptic degeneracy of the integrands. The leading goal is to formulate, with the same defining scales of $a$-$elliptic$ integrands as in \cite{Gmeineder2019} for the BD case, the analogous notion of generalised minimisers of the problem (\ref{Dirprob}) and establish their  $\mathrm{W}^{1,p}_{\mathrm{loc}}$-regularity for some exponent $p>1$. More precisely the integrands $f$ within our considerations are convex, of \emph{linear growth}, twice continuously differentiable and \emph{a-elliptic} for $1<a<1+\frac{1}{n}$, see Section \ref{intro} for a dedicated discussion on these assumptions and the ellipticity range. As for our strategy, adhering to the main skeleton determined in the BV theory \cite{Beck2013} as well as \cite{Anzellotti1985, Giaquinta1979, Demengel1984} we will contemplate an explicit formula for the functional relaxation crucially treating trace values of the competitors at the boundary of $\Omega$, cf. Section \ref{Sectionrel} for the context. The extension to $\mathrm{BV}^{\bA}$ is of course a functional acting on measures and is built upon the decomposition of a Radon measure into absolutely continuous and singular parts. Also in Section \ref{Sectionrel} we include a description in more technical terms. Morally this requires existence of an operator $\mathrm{Tr}$ such that for a map $u \in \mathrm{BV}^{\bA}(\Omega)$ it satisfies $\mathrm{Tr}(u) \in \mathrm{L}^1(\partial \Omega; \mathrm{d}\mathscr{H}^{n-1})$ essentially mimicking the constructions for $\mathrm{BV}$- and $\mathrm{BD}$-maps. Fortunately the works of \cite{Breit2010, Gmeineder2019a, Gmeinedera} give us precisely such necessary trace characterisation as an equivalent requirement of $\bA$ being $\bC$-elliptic, see the statement of Theorem \ref{thmtrace} below. To address the regularity questions our line of argument is based on the vanishnig viscosity approaches of \cite{Beck2013, Gmeineder2019}. In particular this involves stabilising mechanism through an application of the Ekeland variational principle to obtain a minimising sequence approximating the generalised minimiser in suitably chosen perturbation. With the aid of additional locally uniform $\mathrm{L}^p(\Omega; V)$, $p>1$ bound on the $\bA$-gradients, the Ekeland sequence will then become a good candidate for testing the perturbed Euler-Lagrange equation in the final phase of the proof. Because the induced viscosity sequence belongs to the generalised Sobolev space $\mathrm{W}^{\bA,1}$ it is essential to circumvent the appearance of the difference quotient of measures. We will see in Section \ref{viscsection} that with the given set up a good choice for the perturbation space is the usual Lebesgue space $\mathrm{L}^1(\Omega; V)$, weak enough to extrapolate bounds from the available approximants. At the same time this is certainly not a canonical option as the negative Sobolev space $\mathrm{W}^{-1,1}(\Omega)$ as dealt with in \cite{Beck2013} is also a valid alternative. However in contrast to the $\mathrm{BV}$-regime constituting the second order estimates is not adequate within the constrains of the variational principle (\ref{Dirprob}). Aiming to derive the regularity for all generalised minimisers, an alternative method is suggested that favours exploiting the fractional differentiability properties of $\mathrm{BV}^{\bA}$-maps. These modification are partially due to the fact that elements of the viscosity sequence may not have their full gradients uniformly bounded in $\mathrm{L}^{1}(\Omega; V)$. The idea largely resembles the fractional estimates proposed in \cite{Gmeineder2019} by realising that locally $\mathrm{BV}^{\bA}(\Omega)$ embeds continuously into an appropriate Besov scale. To reveal the very essence of our upcoming analysis let us state the central theorem which displays the Sobolev regularity of the generalised minimisers of the variational problem (\ref{Dirprob}).
\begin{theorem}{\normalfont{(The Main Theorem)}}\label{thmreg}
	Let $f \in C^2(\mathrm{Dom}(f))$ be a convex, $a$-elliptic integrand satisfying the linear growth and such that $1<a<1+\frac{1}{n}$. Then any generalised minimiser of $\mathrm{(\ref{Dirprob})}$ is of class $\mathrm{W}^{1,p}_{\mathrm{loc}}(\Omega; V) \cap \mathrm{W}^{\bA,1}(\Omega)$ for some exponent $p=p(a, n, \bA) > 1$.
\end{theorem}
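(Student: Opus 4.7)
The plan follows the vanishing viscosity strategy of \cite{Beck2013,Gmeineder2019}, adapted from the full-gradient and BD settings to the general $\bC$-elliptic case. First I would exploit $\bC$-ellipticity through Theorem \ref{thmtrace} to write down the Lebesgue-Serrin relaxation $\overline{\mathscr{F}}$ of $\mathscr{F}$ on $\mathrm{BV}^{\bA}(\Omega)$, with the boundary penalisation encoded by $f^{\infty}$ applied to the trace gap $(u_0-\mathrm{Tr}(u))\otimes \nu_\Omega$; by construction a generalised minimiser $u$ then minimises $\overline{\mathscr{F}}$. Fix a small $q>1$ and introduce the stabilised functionals
\begin{equation*}
\mathscr{F}_j[v]:=\mathscr{F}[v]+\tfrac{1}{j}\int_{\Omega}|\bA v|^{q}\,\mathrm{d} x
\end{equation*}
on $u_0+\mathrm{W}^{\bA,q}_0(\Omega)$. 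The $\mathrm{L}^q$-term gives coercivity and strict convexity on a reflexive space, hence unique minimisers $u_j$ whose relaxed energies converge to $\overline{\mathscr{F}}[u]$.

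Since $u_j$ need not converge to the chosen $u$, I next apply Ekeland's variational principle in the $\mathrm{L}^1(\Omega;V)$-metric on $u_0+\mathrm{W}^{\bA,q}_0(\Omega)$, as announced in Section \ref{viscsection}. This weak metric (weaker than the $\mathrm{W}^{-1,1}$-choice of \cite{Beck2013}) is forced by Ornstein's non-inequality, which prevents the uniform $\bA$-gradient bounds available for the $v_j$ from being upgraded to $\mathrm{W}^{1,1}$-bounds. One obtains an almost-minimising sequence $v_j$ with $v_j\to u$ in $\mathrm{L}^1$, $\overline{\mathscr{F}}[v_j]\to\overline{\mathscr{F}}[u]$, and the perturbed Euler-Lagrange inequality
\begin{equation*}
\Bigl|\int_{\Omega}\bigl\langle f'(\bA v_j)+\tfrac{q}{j}|\bA v_j|^{q-2}\bA v_j,\; \bA\varphi\bigr\rangle\,\mathrm{d} x\Bigr|\le \sigma_j\|\varphi\|_{\mathrm{L}^1}\qquad\forall\,\varphi\in\mathrm{W}^{\bA,q}_0(\Omega),
\end{equation*}
with $\sigma_j\downarrow 0$.

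The core of the argument is a locally uniform $\mathrm{L}^p$-bound on $\bA v_j$ for some $p=p(a,n,\bA)>1$. Testing the Euler-Lagrange inequality with second differences $\tau_{-h}(\rho^{2}\tau_{h}v_j)$ against a cutoff $\rho\in C^{\infty}_c(\Omega)$ and invoking the pointwise $a$-ellipticity bound $f''(z)\gtrsim (1+|z|^{2})^{-a/2}$ yields a Caccioppoli-type estimate controlling $\int\rho^{2}(1+|\bA v_j|^{2})^{-a/2}|\tau_h\bA v_j|^{2}\,\mathrm{d} x$ by $\|\tau_h v_j\|_{\mathrm{L}^1}$ and lower-order terms. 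To bound the right-hand side uniformly in $j$, I would invoke the $\bC$-elliptic Besov embedding $\mathrm{BV}^{\bA}_{\mathrm{loc}}(\Omega)\hookrightarrow \mathrm{B}^{\alpha}_{1,\infty,\mathrm{loc}}(\Omega;V)$ for a suitable $\alpha\in(0,1)$, which delivers a modulus $|h|^{\alpha}$ and hence a fractional differentiability estimate for $\bA v_j$ in the spirit of \cite{Gmeineder2019}. A short iteration on the fractional exponent, combined with the linear growth of $f$, then upgrades the control to $\bA v_j\in\mathrm{L}^{p}_{\mathrm{loc}}$, the iteration closing precisely within the ellipticity window $1<a<1+\tfrac{1}{n}$. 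The bound passes to $\bA u$ by lower semicontinuity, and the $\bR$-elliptic Korn inequality of Section \ref{Fnsp} (valid for $p>1$) finally delivers $u\in\mathrm{W}^{1,p}_{\mathrm{loc}}(\Omega;V)$.

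The hard part will be the fractional Caccioppoli step: one has to run a difference-quotient argument with test functions that fail to enjoy $\mathrm{W}^{1,1}$-bounds, compensating via only first-order Besov regularity rather than the second-order fractional estimates that underpin the BV theory. The stricter threshold $a<1+\tfrac{1}{n}$ (versus $1+\tfrac{2}{n}$ in BV) is a direct reflection of this loss of one derivative in the a-priori fractional regularity available from $\mathrm{BV}^{\bA}$.
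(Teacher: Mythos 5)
Your proposal follows essentially the same route as the paper: relaxation via the $\bC$-elliptic trace theory, an Ekeland perturbation in the $\mathrm{L}^1(\Omega;V)$-metric, a difference-quotient test of the perturbed Euler--Lagrange inequality combined with $a$-ellipticity and the Besov embedding $\mathrm{BV}^{\bA}(\Omega)\hookrightarrow(\mathrm{B}^{s}_{1,\infty})_{\mathrm{loc}}(\Omega)$, and finally the Poincar\'e/Korn inequality for $p>1$ to pass from $\bA u\in\mathrm{L}^p_{\mathrm{loc}}$ to $u\in\mathrm{W}^{1,p}_{\mathrm{loc}}$. The only cosmetic deviations are that the paper stabilises with a normalised quadratic term $\tfrac{1}{2j^2\mathrm{A}_j}(1+|\xi|^2)$ on $\mathrm{W}^{1,2}$-Dirichlet classes (the quadratic structure is what lets the perturbation term be absorbed into the good term $\mathbf{I}_2$) rather than an $\mathrm{L}^q$-term with small $q$, tests with $\tau^+_{i,h}(\eta^2\tau^-_{i,h}(u_j-a_j))$ including a kernel corrector $a_j$, and obtains $\mathrm{L}^p_{\mathrm{loc}}$ in one step via the $V_t$-potentials and the Besov--Lebesgue embedding rather than by iteration.
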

\noindent 
Here $\mathrm{Dom}(f)$ denotes the domain of $f$ the configuration of which will be settled in the due course. As pointed out by authors in \cite{Gmeineder2019}, which also vastly affects our cases, the reasoning applies to a nonoptimal ellipticity range $1< a <1+\frac{1}{n}$. There have been instances of overcoming this issue such as in \cite{Gmeineder2020BD} for BD by elaborating certain estimates in Orlicz spaces to 'close the gap'. In that instance the structures specific to the symmetric gradient are exploited to develop universal $\mathrm{W}^{1,1}_{\mathrm{loc}}$-regularity estimates which play a quintessential role in the main proof. In such the leading scheme unveils that the final conclusion are drawn by means of reaching uniform second order bounds on the viscosity sequence perturbed in the adjusted space $\mathrm{W}^{-2, 1}(\Omega)$. That being said such type of bounds do not appear to carry over to various other possible operators. Therefore reaching the optimality $a < 1+ \frac{2}{n}$ in a universal manner is yet to be discovered. In summary this paper is in a way an extension of the work done in \cite{Gmeineder2016, Gmeineder2020BD} together with \cite{Beck2013} and generalises the approach of Gmeineder and Kristensen \cite{Gmeineder2019}.
\subsection{Structure of the paper}
In Section \ref{prelim} we provide the main set up by emphasising all the relevant items including the assumptions on $f$, ellipticity of the operator $\bA$, auxiliary results and discussing properties of the generalised Sobolev spaces. Moving on Section \ref{Sectionrel} focuses around defining functional relaxation, the generalised minimisers and concludes by stating the regularity theorem. The final Section \ref{Section4} is entirely devoted to the proof of the main result of this paper.\\
\newline
\text{\textbf{Acknowledgement}}:
The author is sincerely grateful to Dr. Franz Gmeineder for the encouragement to work on the problem and for showing great support.

\section{Preliminaries and general theme}\label{prelim}  
\noindent
\text{\textbf{Notation}}. We give a few short comments on the notation present in the paper for clarity and coherence. Firstly given any $x, y \in \bR^n$ we denote by $\langle \cdot, \cdot \rangle$ the standard euclidean product and $V, W$ finite-dimensional vector spaces of dimension greater than or equal to two. For a map $u: \Omega \subset \bR^n \rightarrow V$ and $e_i \in \bR^n$ a unit vector, the finite differences are going to be written as $\tau^{\pm}_{i,h}u(z) := u(z\pm he_i) - u(z)$ while $\Delta^{\pm}_{i,h}u(z) = \frac{1}{h}(u(z\pm he_i) - u(z))$ the difference quotients whenever defined. In the case when distinction between the two is irrelevant we will omit the $\pm$ signs. The symbols $\mathrm{d}\mathscr{L}^{n}$ and $\mathrm{d}x$ both indicate integration with respect to the $n$-dimensional Lebesgue measure. Importantly often whilst performing calculations involving estimates with constants, despite possibility of varying value we will keep the same letter denoting them unless a substantial dependence occurs which will be indicated. For a differential operator $\bA$ the associated tensor pairing is given by 
\begin{equation*}
v \otimes_{\bA} \xi := \sum_{\alpha=1}^{n} \xi_{\alpha}\bA_{\alpha}v
\end{equation*}
where $\xi \in \bR^n$ and $v\in V$. Given $\eta \in C^1(\Omega)$ and $u \in C^1(\Omega; V)$ with the above pairing there is a counterpart of the \textit{product rule}:
\begin{equation}\label{prdctrul}
\bA(\eta u) = \eta\bA u + u \otimes_{\bA} \nabla \eta.
\end{equation}
In the case of the full gradient operator $\nabla$ (where $\otimes_{\bA}$ is regarded as the generic dyadic product) we recover the usual differentiation rule.  
\subsection{Assumptions on the integrand}\label{intro}
Here we are going to concretely specify the crucial conditions necessarily to be imposed on $f$ in the spirit of the motivations contained in \cite{Gmeineder2019}. These will play a central role in the due regularity argumentation.\\
\newline
	(1) \textit{$f\in C^2(\mathrm{Dom}(f))$ and convex}: Here we declare the domain of $f$ to be the $\bA$-tensor pairing of $V$, that is, $\mathrm{Dom}(f) = \mathrm{span}(V \otimes_{\bA} \bR^n)$. Justifying this seemingly specific choice, we would like to consider the minimal cone within the space $W$ which contains the image of $\bA$ under the space $V$. This is being conducted to match the assertion that the integrand $f$ is meant to act on $\bA u$. After taking the Fourier transform of the latter one deduces the above domain definition of $f$. For instance in the case of the full gradient $\bA = \nabla$, one essentially gets the ordinary dyadic product and work with $\mathrm{Dom}(f) = \bR^{n \times n}$ whereas for the symmetric gradient $\bA= \frac{1}{2}(\nabla + \nabla^{\mathrm{T}})$, such cone is given by $\mathrm{Dom}(f) = \bR^{n \times n}_{\mathrm{sym}}$ the symmetric matrices.\\
	\newline
	(2) \textit{Linear growth condition}: We say that $f: W \rightarrow \bR$ is of linear growth if there exists a positive constant $C>0$ such that
	\begin{equation*}
	|f(z)| \leq C(1+|z|)
	\end{equation*}
	for all $z \in W$.\\
	The growth condition largely makes the entire methodology differ from the $p$-growth integrands for $p>1$. In particular given the latter case we may employ the existent Korn-type inequalities and observe that the resulting functional $\mathscr{F}$ is well-defined and coercive on $\mathrm{W}^{1, p}(\Omega, V)$. Furthermore in comparison with the $L\log L$ growth of $f$, an application of singular integral theory tells us that $\bA u\in L \log L$ implies $\nabla u \in \mathrm{L}^1$ and this accentuates the borderline case beyond which we cannot extract more information on the full gradients. Thus the linear growth provokes the competitors to belong to a relevant function space and retain the functional's coercivity and this to a degree explains the importance of the $\mathrm{W}^{\bA, 1}$-spaces.\\ 
	\newline
	(3) \textit{ the scale of $a$-ellipticity for $1 < a < \infty$}: In the spirit of of Bernstein's work on minimal surface problems \cite{Bernstein1912} the current interpretation of this notion is attributed to Bildhauer and Fuchs preceded by similar considerations of Ladyzhenskaya and Uraltseva \cite{Ladyzhenskaya1970}. A function $f \in C^2(\mathrm{Dom}(f))$ is said to be $a$-elliptic if there are constants $c_1 , c_2 > 0$ such that
	\begin{equation*}
	\frac{c_1|M|^2}{(1+|N|^2)^{\frac{a}{2}}} \leq \langle \nabla^2f(N)M, M \rangle \leq \frac{c_2|M|^2}{(1+|N|^2)^{\frac{1}{2}}}
	\end{equation*}
	for all $M, N \in \bR^{n \times n }$. Here we use the $\mu=a$ identification in comparison with the terminology in \cite{Bildhauer2003}. We will specifically focus on the range $1<a <1 + \frac{1}{n}$.\\
	Indeed the reason for excluding the borderline value $a=1$ is due to the consequent $L\log L$ growth of the integrand. To see this, for $f \in C^2(\bR)$ and 1-$elliptic$ a direct calculation shows that the derivative $|f'|$ is unbounded and hence negates the linear growth assumption. Although convexity does give resembling behaviour from below and above however it does not extend to second order terms. Furthermore since as for the upper bound we shall follow the approach of \cite{Gmeineder2019} and bring forward the fractional estimates which comes at the cost of restricting the ellipticity range to at most $a < 1 + \frac{1}{n}$. We should stress that nevertheless the regularity results for the range $a < 1 + \frac{2}{n}$ for all generalised minima have been established in specific cases e.g. F.Gmeineder in \cite{Gmeineder2020BD} in $\mathrm{BD}$ obtains it by elaborating second order uniform bounds. There the full gradients estimates are tailored precisely for the symmetric gradient. On the other hand for $1 + \frac{2}{n} \leq a \leq 3$ it's not totally clear how to obtain $\mathrm{L}^{\infty}$ which would come in vital when deriving the Sobolev regularity of the minimisers.
\subsection{On the differential operator $\bA$}\label{section2.2}
In order to perform the Ekeland type viscosity approximation for which a relaxation of the functional is inevitably needed, we have got to utilise sufficient trace theory of the set of competitors. This heavily relies on the ellipticity sort that the operator $\bA$ displays. Within our interest here is the one stated below:\\
\newline
{$\bC$-\emph{ellipticity condition}\cite{Smith1970, Gmeineder2019a}}:
Let $\bA$ be a linear differential operator from $V$ to $W$. By taking a natural complexification $V + iV$ and $W+iW$ we say that $\bA$ is $\bC$-elliptic if for any $\xi \in \bC^n \setminus \{0\}$ the induced symbol map
	\begin{equation*}
		\bA[\xi] = \sum_{\alpha=1}^{n} \bA_{\alpha} \xi_{\alpha} \in \mathcal{L}(V+iV, W+iW)
	\end{equation*}
	is injective as a linear map. The nullspace of the operator $\bA$ is given by
	\begin{equation*}
		\ker(\bA; \Omega) := \{\Phi \in \mathcal{D}(\Omega; V);\,\bA\Phi = 0\} 
	\end{equation*}
	where $\mathcal{D}(\Omega; V)$ stands for the vector-valued distributions and the equality in the set is understood distributionally. Importantly there is a characterisation which states that:
	\begin{equation} 
	\text{the operator $\bA$ is $\bC$-elliptic} \iff \text{$\dim \ker(\bA; \Omega) < \infty$}.\label{nullspA}
	\end{equation}
\begin{ex}
Let us provide common instances of such operators present in multiple contexts:\\
$\mathrm{(i)}$ Symmetric gradient: This operator is given by $\varepsilon(u):=\frac{1}{2}(\nabla u + (\nabla u)^{\mathrm{T}})$ with its nullspace being affine transformations of anti-symmetric gradient called the space of rigid deformations $\mathcal{R}(\Omega):= \{x \mapsto Ax + b:\,A \in \bR^{n \times n}, \, b \in \bR^n\}$ which is finite-dimensional. Thus by (\ref{nullspA}) the operator $\varepsilon$ is a representative of $\bC$-elliptic family.\\
\newline
$\mathrm{(ii)}$ Trace-free symmetric operator: Let $V = \bR^n$ and $W = \bR^{n\times n}$. We define it by $\mathcal{E}^{D}u:=\varepsilon(u) - \frac{1}{n}\mathrm{div}(u)I_n$ where $I_n$ is the ($n \times n$)-identity matrix. This operator appears in several applications such as general relativity \cite{Bartnik2004} when studying the gravitational fields. This is a degenerate example as the $\bC$-ellipticity is preserved for $n \geq 3$ however when inspecting $n=2$ the issue is well illustrated by contemplating the map $z \mapsto (1-z)^{-1}$ on the disc $\bD \simeq B_1(0)$ with the identification $\bR^2 \simeq \bC$. The authors in \cite{Breit2010} discovered strong links between the ellipticity type of an operator, its nullspace and the traces of function spaces associated to it.
\end{ex}

\subsection{The V-potentials}	
Invoking the constructions credited to Acerbi and Fusco \cite{Acerbi1989}, for a parameter $1<t<2$ we define the function $V_t:\bR^n \rightarrow \bR^n$ by
\begin{equation*}
V_t(z) := (1+|z|^2)^{\frac{1-t}{2}}z.
\end{equation*}
\noindent	
This supplementary map will turn out quite handy for getting an intermediate step when bounding Sobolev norms especially because it satisfies the following:
\begin{itemize}
\item for a measurable $u:\bR^n \rightarrow \bR^n, h>0$ and a unit vector $e_i$
\begin{equation}\label{V1}
	|\tau^{\pm}_{i,h}V_t(u(x))| \sim (1+|u(x+he_i)|^2 + |u(x)|^2)^{\frac{1-t}{2}}|\tau^{\pm}_{i,h}u(x)|
\end{equation}
\item we can find $C>0$ such that 
\begin{equation}\label{V2}
\min\{|z|,|z|^{2-t}\} \leq CV_t(z)
\end{equation}
\item with $u$ as above and $\omega \subset \bR^n$ open, bounded it follows that 
\begin{equation}\label{V3}
\int_{\omega}^{} |u|^{(2-t)p} \hspace*{0.1cm}\mathrm{d}\mathscr{L}^{n} \leq \mathscr{L}^n(\omega) + C_p \int_{\omega}^{} |V_t(u)|^p \hspace*{0.1cm}\mathrm{d}\mathscr{L}^{n} 
\end{equation}
\item for the integrand $f$ as in (*) and $\Omega \subset \bR^n$ bounded and Lipschitz domain there is a constant $C_{t,n} > 0$ in a way that for all $u:\Omega \rightarrow \bR^N$ measurable, all Lipschitz compact $K \Subset \Omega$ and all $|h| < \mathrm{dist}(K, \partial \Omega)$ we have the bound
\begin{equation}\label{V4}
\frac{|\tau^{\pm}_{i,h}V_t(u(x))|^2}{(1+|u(x+he_i)|^2 + |u(x)|^2)^{\frac{2(1-t) + a}{2}}} \leq C\frac{|\tau^{\pm}_{i,h}u(x)|^2}{(1+|u(x+he_i)|^2 + |u(x)|^2)^{\frac{a}{2}}}.
\end{equation} 
\end{itemize}
The justification of the above properties can be also found in \cite{Gmeineder2019}.
\subsection{Function spaces}\label{Fnsp}
Before we fully plunge into the main part of our discourse we define the underlying function spaces tailored to our Dirichlet problem, namely the generalised $\bA$-Sobolev spaces as well as the maps of bounded $\bA$-variation. We will point out the key results by invoking a rigorous analysis contained in \cite{Breit2010}. Before however let us quickly recap on the Besov scales. For $\Omega$ open, $0<s<1$ and $1 \leq p,q \leq \infty$ a function $u: \Omega \rightarrow V$ is in $\mathrm{B}^s_{p,q}(\Omega)$ if $u \in \mathrm{L}^p(\Omega, V)$ and 
\begin{equation*}
\begin{aligned}
&|u|_{\mathrm{B}^s_{p,q}} := \Big(\int_{0}^{\infty} \Big(\frac{\sup_{|h| < t}\|\Delta_{h}u\|_{\mathrm{L}^p(\Omega)}}{h^s}\Big)^q \hspace*{0.1cm} \frac{\mathrm{d}t}{t}\Big)^{\frac{1}{q}} < \infty \hspace*{1cm} \text{provided $p, q \in [1, \infty)$}\\
&|u|_{\mathrm{B}^s_{p,\infty}} := \sup_{|h|>0} \frac{\|\Delta_{h}u\|_{\mathrm{L}^p(\Omega)}}{h^s} \hspace*{5.3cm} \text{provided $p \in [1,\infty)$}
\end{aligned}
\end{equation*}
where $s>0$ and $\Delta_{h}$ represents the finite difference if defined and being equal to zero if $x+h \notin\Omega$. Furthermore the Besov seminorm is equivalent to the Gagliardo seminorm
\begin{equation*}
|u|_{\mathrm{W}^{s,p}} := \int_{\Omega} \int_{\Omega} \frac{|u(x)-u(y)|^p}{|x-y|^{n+sp}} \hspace*{0.1cm} \mathrm{d}x \,\mathrm{d}y.
\end{equation*}
Finally we remark on a counterpart of the Sobolev-type embedding in this setting: for any $1 \leq p < \frac{n}{n-s}$
\begin{equation}\label{embbBes}
(\mathrm{B^s_{1, \infty}})_{\mathrm{loc}}(\bR^n) \hookrightarrow \mathrm{L}^p_{\mathrm{loc}}(\bR^n; V).
\end{equation} 
\begin{definition}
Let $\Omega \subset \bR^n$ be an open set. Then we define the following:\\
$\mathrm{(1)}$ The $\bA$-Sobolev space $\mathrm{W}^{\bA,1}(\Omega)$ is given by all the measurable functions $u:\Omega \rightarrow V$ such that
\begin{equation*}
\|u\|_{\mathrm{W}^{\bA,1}(\Omega)} := \|u\|_{\mathrm{L}^1(\Omega)} + \|\bA  u\|_{\mathrm{L}^1(\Omega)} <\infty
\end{equation*}
where the differential operator is understood in the distributional sense.\\
\vspace{1pt}
\\
$\mathrm{(2)}$ the space of functions of bounded $\bA$-variation $\mathrm{BV}^{\bA}(\Omega)$ which comprises all $u:\Omega \rightarrow V$ such that $\bA u$ is a Radon measure on $\Omega$ and 
\begin{equation*}
\|u\|_{\mathrm{BV}^{\bA}} := \|u\|_{\mathrm{L}^1(\Omega)} + |\bA u|(\Omega) < \infty
\end{equation*}
where the latter term denotes the total variation given by
\begin{equation*}
|\bA u|(\Omega) = \sup \{\int_{\Omega}^{}\langle u, \bA^* \phi \rangle\hspace*{0.1cm}\mathrm{d}\mathscr{L}^{n} \,:\, \phi \in C^1_{c}(\Omega, W)\, , \|\phi\|_{\mathrm{L}^{\infty}} \leq 1\}.
\end{equation*}
where  $\bA^*$ is the formal adjoint. Exemplarily this notation gives $\mathrm{W}^{1, 1} = \mathrm{W}^{\nabla, 1}$, $\mathrm{BV} = \mathrm{BV^{\nabla}}$ and $\mathrm{BD} = \mathrm{BV^{\varepsilon}}$. Elaborating on the latter space this characterisation means
\begin{equation*}
\mathrm{BD}(\Omega) := \{u \in \mathrm{L^1}(\Omega; \bR^n) \,:\, \varepsilon(u) \in \mathcal{M}(\Omega; \bR^{n \times n}_{\mathrm{sym}})\}
\end{equation*}
so in particular
\begin{equation*}
|\varepsilon(u)|(\Omega) := \sup \{\int_{\Omega}^{}\langle u, \mathrm{div}(\phi) \rangle\hspace*{0.1cm}\mathrm{d}x \,:\, \phi \in C^1_{c}(\Omega, \bR^{n\times n}_{\mathrm{sym}})\, , \|\phi\|_{\mathrm{L}^{\infty}} \leq 1\} < \infty.
\end{equation*}
Notice that there is a strict inclusion $\mathrm{BV}(\Omega) \subsetneq \mathrm{BD}(\Omega)$ due to Ornstein's. The space of bounded deformation is instrumental in fluid dynamics and for an incomplete list of where it has been examined we refer the reader to e.g. \cite{Ambrosio1997, Anzellotti1980, Kohn1982, Suquet1979, Temam1980}.
\end{definition}
\noindent
\text{\textbf{Notions of convergence}}. Analogously to the characterisations of functions of bounded variations \cite{Ambrosio2000} we distinguish several topologies on $\mathrm{BV^{\bA}}$ coming from the types of convergences. Each displays useful for our investigations. Let $\Omega \subset \bR^n$ be a bounded domain with Lipschitz boundary $\partial \Omega$. Let $u \in \mathrm{BV^{\bA}}(\Omega)$ and take a sequence $\{u_k\}$ contained in $\mathrm{BV^{\bA}}(\Omega)$. It is said that
\begin{itemize}
\item $u_k$ converges to $u$ \emph{weakly}*, written $u_k \rightharpoonup^{*} u$ if and only if $u_k \rightarrow u$ in $\mathrm{L}^1(\Omega; V)$ and $\bA u_k \stackrel{*}{\rightharpoonup}\bA u$ weakly as Radon measures
\item  $u_k$ converges to $u$ \emph{strictly}, written $u_k \stackrel{d_s}{\longrightarrow} u$ if and only if
\begin{equation*}
\int_{\Omega}^{} |u-u_k| \hspace*{0.1cm}\mathrm{d}\mathscr{L}^{n} + \big||\bA u_k|(\Omega)-|\bA u|(\Omega)\big| \longrightarrow 0.
\end{equation*}
\item $u_k$ converges to $u$ \emph{area-strictly}, written $u_k \stackrel{\langle \cdot \rangle}{\longrightarrow} u$ if and only if $u_k \rightarrow u$ in $\mathrm{L}^1(\Omega; V)$ and
\begin{equation*}
\int_{\Omega}^{} \sqrt{1+|\bA [\nabla] u_k|^2} \hspace*{0.1cm}\mathrm{d}\mathscr{L}^{n} + |\bA^s u_k|(\Omega) \longrightarrow \int_{\Omega}^{} \sqrt{1+|\bA [\nabla] u|^2} \hspace*{0.1cm}\mathrm{d}\mathscr{L}^{n} + |\bA^s u|(\Omega) 
\end{equation*}
where the above expressions incorporate the Lebesgue-Radon-Nikodym decomposition as for the BV-theory namely:
\begin{equation}\label{Lebdec}
\bA u = \bA^a u + \bA^s u = \bA [\nabla] u + \bA^s u
\end{equation}  
with the density $\bA[\nabla] u$ standing for the approximate gradient borrowing the convention from \cite{Gmeineder2019d}.
\end{itemize}
\vspace{10pt}
Let us list a selection of essential properties respected by the above defined function spaces. 
\begin{theorem}\label{compactn}
Let $u,u_1,u_2,\dots \in \mathrm{BV^{\bA}}(\Omega)$ where $\Omega$ is a bounded Lipschitz domain. Then the following hold:\\
$\mathrm{(a)}$ Given $u_k \rightarrow u$ in $\mathrm{L}^1_{\mathrm{loc}}(\Omega; V)$, then
\begin{equation*}
|\bA u|(\Omega) \leq \liminf_{k\rightarrow \infty} |\bA u_k|(\Omega).
\end{equation*}
$\mathrm{(b)}$ If $\{u_k\}$ is uniformly bounded in $\mathrm{BV^{\bA}}$-norm, then there exists $u$ in $\mathrm{BV^{\bA}}(\Omega)$ and a subsequence $\{u_{k_j}\}$ such that $$u_{k_j} \rightharpoonup^{*} u.$$
$\mathrm{(c)}$ Smooth approximation: $(C^{\infty} \cap \mathrm{W}^{\bA,1})(\Omega)$ is a dense subset of $\mathrm{W}^{\bA,1}(\Omega)$ with respect to the norm topology and a dense subset of $\mathrm{BV^{\bA}}(\Omega)$ in the area strict topology.\\
\vspace{1pt}
\\
$\mathrm{(d)}$ There exists a bounded linear extension operator $E_{\Omega}:\mathrm{BV^{\bA}}(\Omega) \rightarrow \mathrm{BV^{\bA}}(\bR^n)$.
\end{theorem}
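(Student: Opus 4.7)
The strategy is to treat the four assertions in turn, each following a standard BV-type template adapted to the $\bA$-operator via $\bC$-ellipticity and the trace theory referenced in the paper.

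For part (a), I would argue directly from the dual definition of the total variation. Fix $\phi \in C^1_c(\Omega; W)$ with $\|\phi\|_{\mathrm{L}^\infty} \leq 1$ and set $K := \supp(\phi) \Subset \Omega$. Since $u_k \to u$ in $\mathrm{L}^1(K;V)$, and $\bA^* \phi \in \mathrm{L}^\infty(K)$, one passes to the limit under the integral:
\begin{equation*}
\int_{\Omega} \langle u, \bA^* \phi \rangle \, \mathrm{d}\mathscr{L}^n \;=\; \lim_{k\to\infty} \int_{\Omega} \langle u_k, \bA^* \phi \rangle \, \mathrm{d}\mathscr{L}^n \;\leq\; \liminf_{k\to\infty} |\bA u_k|(\Omega).
\end{equation*}
Taking the supremum over admissible $\phi$ yields the claim.

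For part (b), the uniform $\mathrm{BV}^\bA$-bound gives $\sup_k \|u_k\|_{\mathrm{L}^1(\Omega)} + \sup_k |\bA u_k|(\Omega) < \infty$. Invoking the $\bC$-elliptic Sobolev-type embedding $\mathrm{BV}^\bA(\Omega) \hookrightarrow \mathrm{L}^{n/(n-1)}(\Omega; V)$ (a consequence of the Korn–Sobolev inequalities of Breit–Diening–Gmeineder \cite{Breit2010}) and Rellich's theorem, I extract a subsequence converging in $\mathrm{L}^1_{\mathrm{loc}}(\Omega;V)$ to some $u$; after further passage to a subsequence, a.e. convergence together with the uniform $\mathrm{L}^1$ bound and Fatou yield $u \in \mathrm{L}^1(\Omega;V)$. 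The measures $\bA u_k$, being uniformly bounded in the separable predual $C_0(\Omega; W)^*$, admit a weak-$*$ convergent subsequence by Banach–Alaoglu; the limit must coincide with $\bA u$ distributionally, and part (a) shows $u \in \mathrm{BV}^\bA(\Omega)$.

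For part (c), the Meyers–Serrin scheme is carried out using a locally finite open covering of $\Omega$, a subordinate partition of unity $\{\eta_j\}$, and mollification at scales $\epsilon_j$ chosen so that $u \ast \rho_{\epsilon_j}$ is supported inside the respective chart. Since $\bA$ has constant coefficients, $\bA(u \ast \rho_{\epsilon_j}) = (\bA u) \ast \rho_{\epsilon_j}$, and the approximant $u^\epsilon := \sum_j \eta_j(u\ast \rho_{\epsilon_j})$ together with the product rule (\ref{prdctrul}) produces a smooth function converging to $u$ in the $\mathrm{W}^{\bA,1}$-norm. For the $\mathrm{BV}^\bA$ case, one upgrades $u_k \to u$ in $\mathrm{L}^1(\Omega;V)$ with $\bA u_k \stackrel{*}{\rightharpoonup} \bA u$ to the area-strict topology by a Reshetnyak-type continuity argument on the 1-homogeneous recession functional $z \mapsto \sqrt{1+|z|^2}$.

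For part (d), which I regard as the main obstacle, I use the $\bC$-elliptic trace theorem (Theorem \ref{thmtrace}) to give meaning to $\mathrm{Tr}(u) \in \mathrm{L}^1(\partial\Omega; d\mathscr{H}^{n-1})$. Define the extension by zero, $E_\Omega u := u \, \chi_\Omega$. A distributional computation using the trace identity yields
\begin{equation*}
\bA(E_\Omega u) \;=\; (\bA u) \llcorner \Omega \,+\, \bigl(\mathrm{Tr}(u) \otimes_{\bA} \nu_\Omega\bigr)\, \mathscr{H}^{n-1} \llcorner \partial\Omega,
\end{equation*}
so that $|\bA(E_\Omega u)|(\bR^n) \leq |\bA u|(\Omega) + C \|\mathrm{Tr}(u)\|_{\mathrm{L}^1(\partial\Omega)} \leq C'\|u\|_{\mathrm{BV}^{\bA}(\Omega)}$, the final inequality being the continuity of the trace operator. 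Linearity and boundedness of $E_\Omega:\mathrm{BV}^\bA(\Omega) \to \mathrm{BV}^\bA(\bR^n)$ are then immediate. The delicate point is the distributional identification of $\bA(E_\Omega u)$, which is established by first verifying it for smooth functions via integration by parts on the Lipschitz domain (where the boundary term is classical), and then extending to general $u \in \mathrm{BV}^\bA(\Omega)$ through area-strict density from part (c) combined with continuity of the trace operator in that topology.
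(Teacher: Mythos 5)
The paper offers no proof of this theorem: it is stated as a compendium of known facts and implicitly deferred to \cite{Breit2010, Gmeineder2019a} (and to \cite{Ambrosio2000} for the BV prototype). Your sketch reconstructs essentially that standard route, and parts (a) and (d) are sound as written: (a) is the usual duality argument, and in (d) you correctly identify the $\bC$-elliptic trace/Gauss--Green identity as the crux of the zero-extension (consistent with the paper's standing assumption and with Theorem \ref{glue} applied with $v\equiv 0$ on $K\setminus\overline{\Omega}$).

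Two steps need repair. In (b), the continuous embedding $\mathrm{BV}^{\bA}(\Omega)\hookrightarrow\mathrm{L}^{n/(n-1)}(\Omega;V)$ together with ``Rellich'' does not produce $\mathrm{L}^1_{\mathrm{loc}}$-precompactness: Rellich--Kondrachov is a statement about Sobolev spaces, and a norm-bounded subset of $\mathrm{L}^{n/(n-1)}$ is not $\mathrm{L}^1$-precompact. What is actually needed is equicontinuity of translations, namely a uniform estimate of the form $\|u_k(\cdot+h)-u_k\|_{\mathrm{L}^1(B)}\le c\,|h|^{s}\,\|u_k\|_{\mathrm{BV}^{\bA}(\Omega)}$ for $B\Subset\Omega$ --- precisely the Nikolskij estimate underlying Theorem \ref{embb} --- after which the Fr\'{e}chet--Kolmogorov criterion yields the $\mathrm{L}^1_{\mathrm{loc}}$-convergent subsequence; the rest of your argument (Banach--Alaoglu for the measures, identification of the limit via $\bA^*$, membership in $\mathrm{BV}^{\bA}$ via part (a)) then goes through. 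In (c), the area-strict convergence of the Meyers--Serrin approximants cannot be deduced from Reshetnyak's continuity theorem, since that theorem takes strict convergence of the measures as its \emph{hypothesis}; invoking it here is circular. (Note also that $z\mapsto\sqrt{1+|z|^2}$ is not $1$-homogeneous; the relevant $1$-homogeneous object is the perspective function $(t,z)\mapsto\sqrt{t^2+|z|^2}$ on $\bR\times W$.) The correct argument is direct: Jensen's inequality for mollification gives the upper bound $\limsup_{\epsilon\to 0}\int_{\Omega}\sqrt{1+|\bA u^{\epsilon}|^2}\,\mathrm{d}x\le\int_{\Omega}\sqrt{1+|\bA[\nabla]u|^2}\,\mathrm{d}x+|\bA^s u|(\Omega)$ once the cross terms $\sum_j (u\ast\rho_{\epsilon_j})\otimes_{\bA}\nabla\eta_j$ are shown to vanish in $\mathrm{L}^1$ (using $\sum_j\nabla\eta_j=0$), while lower semicontinuity of the area functional under weak-* convergence gives the matching lower bound; the two together are what ``area-strict'' means.
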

\noindent
The contributions of Gmeineder \cite{Breit2010, Gmeineder2019a} provide a thorough exposition of the traces of the maps of $\bA$-variation. This is an essential ingredient in the entire strategy since it ultimately allows us conduct very similar method employed in the examples of BV and BD spaces. Here importantly we emphasize the fact that our differential operator has got to be $\bC$-elliptic, since this very condition ensures the same trace properties. For future reference, let us state a few results describing this matter:
\begin{theorem}{\normalfont{(Existence of Traces \cite[Thm 4.17]{Breit2010})}}\label{thmtrace}
For an operator $\bA[D]$ that is $\bC$-elliptic there exists a trace operator $\mathrm{Tr} :\mathrm{BV}^{\bA}(\Omega) \rightarrow \mathrm{L}^1(\partial\Omega)$ such that
\begin{itemize}
\item $\mathrm{Tr}(u) = u|_{\partial \Omega}$ $\mathscr{H}^{n-1}$-a.e on $\partial\Omega$ provided that $u$ is in $C(\overline{\Omega})\cap\mathrm{BV}^{\bA}(\Omega)$ and is a unique such extension
\item $\mathrm{Tr}$ is continuous in the strict topology
\item the restriction $\mathrm{Tr} :\mathrm{W}^{\bA,1}(\Omega) \rightarrow \mathrm{L}^1(\partial\Omega; \mathscr{H}^{n-1})$ is surjective,
\end{itemize}
where elements in the target space are regarded with respect to the Hausdorff measure $\mathscr{H}^{n-1}.$
\end{theorem}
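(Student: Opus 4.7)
The plan is to follow the classical trace constructions for $\mathrm{BV}$ and $\mathrm{BD}$, with the essential new ingredient being a Korn--Poincaré inequality supplied by $\bC$-ellipticity that compensates for the failure of Ornstein's inequality. The first move is localisation: using the Lipschitz regularity of $\partial\Omega$, I would cover $\partial\Omega$ by finitely many cylinders over each of which it is the graph of a Lipschitz function, pick a subordinate partition of unity $\{\psi_j\}$, and via the product rule \eqref{prdctrul} reduce the task to controlling each $\psi_j u$ individually. A bi-Lipschitz chart then transports the problem to the model half-ball $B^+ := B_1 \cap \{x_n > 0\}$ with flat boundary piece $B^0 := B_1 \cap \{x_n = 0\}$. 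Since the principal symbol of $\bA$ transforms by conjugation under diffeomorphisms, the $\bC$-ellipticity is retained on $B^+$.

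Next, for $u \in C^\infty(\overline{B^+};V) \cap \mathrm{W}^{\bA,1}(B^+)$ I would establish the trace inequality
\begin{equation*}
\|u\|_{\mathrm{L}^1(B^0)} \le C\bigl(\|u\|_{\mathrm{L}^1(B^+)} + \|\bA u\|_{\mathrm{L}^1(B^+)}\bigr).
\end{equation*}
If $\bA = \nabla$ this is the standard Gagliardo trace. For general $\bC$-elliptic $\bA$, Ornstein's Non-Inequality rules out a direct bound of $\nabla u$ by $\bA u$; instead I would invoke the Korn--Sobolev inequality available precisely for $\bC$-elliptic (equivalently, canceling in the sense of Van Schaftingen) operators,
\begin{equation*}
\|u - \Pi u\|_{\mathrm{L}^{n/(n-1)}(B^+)} \le C\|\bA u\|_{\mathrm{L}^1(B^+)},
\end{equation*}
where $\Pi$ denotes the finite-rank projection onto $\ker(\bA; B^+)$, whose finite-dimensionality is exactly the content of \eqref{nullspA}. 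Combining this improved integrability with a Poisson-type representation of $u$ modulo the nullspace against its boundary data, mediated by the fundamental solution of the elliptic system $\bA^* \bA$, yields the trace bound on smooth functions.

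Third, the area-strict density of $C^\infty \cap \mathrm{W}^{\bA,1}$ inside $\mathrm{BV}^{\bA}$ from Theorem \ref{compactn}(c) extends the above bound by continuity to all of $\mathrm{BV}^{\bA}(\Omega)$. Strict continuity then follows from a Reshetnyak-type argument applied to $u_k \stackrel{d_s}{\longrightarrow} u$, and uniqueness of the continuous extension pins down the pointwise interpretation on $C(\overline{\Omega})\cap\mathrm{BV}^{\bA}(\Omega)$. Surjectivity onto $\mathrm{L}^1(\partial\Omega;\mathscr{H}^{n-1})$ is obtained by a standard lifting: for $g \in \mathrm{L}^1(\partial\Omega)$ one produces a smooth preimage via a Poisson-type extension driven by the fundamental solution of $\bA^* \bA$ applied to mollifications $g_\varepsilon$ of $g$, then takes $\mathrm{L}^1$-limits using the trace bound of the previous step.

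The main obstacle is producing the trace inequality itself without pointwise $\mathrm{L}^1$-control of $\nabla u$, and it is exactly here that the $\bC$-ellipticity hypothesis is indispensable. Mere $\bR$-ellipticity does not yield the Van Schaftingen--type $\mathrm{L}^{n/(n-1)}$-Korn inequality, and without that ingredient the normal-direction integration underpinning Gagliardo's classical argument has no substitute. Once the second step is in hand, every subsequent item --- strict continuity, compatibility with pointwise traces on $C(\overline{\Omega})$, and surjectivity --- follows from standard density, mollification, and convergence manoeuvres adapted to the $\mathrm{BV}^{\bA}$ framework.
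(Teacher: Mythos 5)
First, a point of comparison: the paper does not prove Theorem \ref{thmtrace} at all --- it is imported verbatim from \cite[Thm 4.17]{Breit2010} --- so your attempt is to be measured against that reference. Your outer skeleton (localisation along the Lipschitz boundary, a trace inequality for smooth maps, extension by density, strict continuity, surjectivity by lifting) is the standard template and agrees in outline with \cite{Breit2010}. The gap sits exactly at the step that carries all the weight, namely the trace inequality $\|u\|_{\mathrm{L}^1(B^{0})}\le C(\|u\|_{\mathrm{L}^1(B^{+})}+\|\bA u\|_{\mathrm{L}^1(B^{+})})$. An interior Korn--Sobolev bound $\|u-\Pi u\|_{\mathrm{L}^{n/(n-1)}}\lesssim\|\bA u\|_{\mathrm{L}^1}$ gives no control of boundary behaviour whatsoever (and, as an aside, on domains this bound is governed by $\bC$-ellipticity via \cite{Gmeineder2019a}, which is \emph{not} equivalent to Van Schaftingen's cancellation condition \cite{VanSchaftingen2011}; the parenthetical equivalence you assert is false). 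The proposed repair --- a ``Poisson-type representation mediated by the fundamental solution of $\bA^{*}\bA$'' --- does not work: $u$ is not a solution of $\bA^{*}\bA u=0$, a boundary representation for the second-order system would require precisely the boundary data you are trying to construct, and the layer/volume potentials would have to be applied to the merely $\mathrm{L}^1$ (or measure-valued) datum $\bA u$, where singular integrals fail to be bounded --- the Ornstein obstruction reappearing in disguise. What actually closes this step in \cite{Breit2010} is an algebraic consequence of \eqref{nullspA}: $\bA$ is $\bC$-elliptic if and only if there exist an integer $l$ and linear maps expressing $\partial_{n}^{l}u$ as a combination of derivatives of $\bA u$ of order at most $l-1$; a Taylor-type integration in the normal variable then shows that the slices $x_{n}\mapsto u(\cdot,x_{n})$ form a Cauchy family in $\mathrm{L}^1$ of the tangential cross-section, which simultaneously defines $\mathrm{Tr}(u)$ and yields the quantitative continuity. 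Without this mechanism (or a genuine substitute) your Step 2 does not close.

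Two further, more repairable issues. Flattening by a bi-Lipschitz chart destroys the constant-coefficient structure on which the whole $\bC$-ellipticity calculus rests (the transformed coefficients are only bounded measurable); \cite{Breit2010} avoids this by working on special Lipschitz epigraphs and integrating along a fixed coordinate direction. Also, Theorem \ref{compactn}(c) gives area-strict density by maps smooth only in the interior, and a bound by the $\mathrm{W}^{\bA,1}$-norm extends $\mathrm{Tr}$ continuously in the norm topology, not in the strict topology on $\mathrm{BV}^{\bA}$; strict continuity must be argued separately, as in the classical BV case. Surjectivity, by contrast, is easier than your sketch suggests: Gagliardo's $\mathrm{W}^{1,1}$ extension of $g\in\mathrm{L}^1(\partial\Omega)$ already lies in $\mathrm{W}^{\bA,1}(\Omega)$, so no $\bA$-specific Poisson extension is needed there.
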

\noindent
As it is customary for the Sobolev functions we define accordingly the \emph{zero boundary values} space as
\begin{equation*}
\mathrm{W}^{\bA ,1}_0(\Omega) := \{u \in \mathrm{W}^{\bA ,1}(\Omega): \mathrm{Tr}(u) = 0\}.
\end{equation*}
This respects an equivalent characterisation given by $\mathrm{W}^{\bA ,1}_0(\Omega) = \overline{C^{\infty}_c(\Omega; V)}^{\|\cdot\|_{\mathrm{W}^{\bA ,1}}}$ i.e. the closure in the $\mathrm{W}^{\bA ,1}$-norm, see \cite{Breit2010}. The trace theorem in consequence allows us to glue two $\mathrm{BV}^{\bA}$ maps together and also incorporates area-strict smooth  approximation.
\begin{theorem}\cite[Cor. 4.21]{Gmeineder2019a}\label{glue}
	Let $\Omega \Subset K$ for both $\Omega$ and $K$ being bounded Lipschitz domains and let $\bA$ be $\bC$-elliptic. Suppose that $u \in \mathrm{BV}^{\bA}(\Omega)$ and $v \in \mathrm{BV}^{\bA}(K \setminus \Omega)$ and define $\tilde{u} := u \sqcup v := \chi_{\Omega}u + \chi_{K \setminus \overline{\Omega}}v$. Then it holds that $\tilde{u} \in \mathrm{BV}^{\bA}(K)$.
\end{theorem}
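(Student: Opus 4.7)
The starting observation is that $\tilde u \in \mathrm{L}^1(K;V)$ is immediate from $u \in \mathrm{L}^1(\Omega;V)$ and $v\in \mathrm{L}^1(K\setminus\overline\Omega;V)$, so the task reduces to producing $\bA\tilde u$ as a finite $W$-valued Radon measure on $K$. The plan is to do this by an explicit formula obtained from a generalised Green identity: testing against $\phi\in C^1_c(K;W)$ and splitting
\[
\int_K\langle\tilde u,\bA^*\phi\rangle\,\mathrm{d}x \;=\; \int_\Omega\langle u,\bA^*\phi\rangle\,\mathrm{d}x \;+\; \int_{K\setminus\overline\Omega}\langle v,\bA^*\phi\rangle\,\mathrm{d}x,
\]
and then integrating by parts on each piece.

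The key tool I would invoke is the following Green formula valid on any bounded Lipschitz domain $\omega$ for $w\in\mathrm{BV}^{\bA}(\omega)$ and $\psi\in C^1(\overline{\omega};W)$:
\[
\int_\omega\langle w,\bA^*\psi\rangle\,\mathrm{d}x \;=\; \int_\omega\langle\psi,\mathrm{d}(\bA w)\rangle \;-\; \int_{\partial\omega}\langle \mathrm{Tr}(w)\otimes_\bA \nu_\omega,\psi\rangle\,\mathrm{d}\mathscr{H}^{n-1}.
\]
For $w$ smooth up to the boundary this is just the ordinary integration by parts, based on the identity $\sum_\alpha \nu_\alpha \bA_\alpha w = w\otimes_\bA\nu$. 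The general $\mathrm{BV}^{\bA}$ case then follows by strict density of $(C^\infty\cap\mathrm{W}^{\bA,1})(\omega)$ in $\mathrm{BV}^{\bA}(\omega)$ (Theorem \ref{compactn}(c)), combined with continuity of $\mathrm{Tr}$ in the strict topology (Theorem \ref{thmtrace}), which passes both the bulk and boundary terms to the limit.

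Applying this identity to $u$ on $\Omega$ and to $v$ on $K\setminus\overline\Omega$ — noting that the outward normal of $K\setminus\overline\Omega$ along $\partial\Omega$ equals $-\nu_\Omega$, and that the boundary contribution on $\partial K$ vanishes because $\phi$ has compact support in $K$ — summation yields
\[
\int_K\langle\tilde u,\bA^*\phi\rangle\,\mathrm{d}x = \int_\Omega\langle\phi,\mathrm{d}(\bA u)\rangle + \int_{K\setminus\overline\Omega}\langle\phi,\mathrm{d}(\bA v)\rangle - \int_{\partial\Omega}\bigl\langle (\mathrm{Tr}_\Omega u - \mathrm{Tr}_{K\setminus\overline\Omega} v)\otimes_\bA \nu_\Omega,\phi\bigr\rangle\,\mathrm{d}\mathscr{H}^{n-1}.
\]
Reading this as a distributional identity on $K$, the measure $\bA\tilde u$ is the sum of $\bA u$ on $\Omega$, $\bA v$ on $K\setminus\overline\Omega$ and a $\mathscr{H}^{n-1}$-singular jump part concentrated on $\partial\Omega$. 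Taking the supremum over $\|\phi\|_\infty\le 1$ and using the elementary pointwise bound $|z\otimes_\bA\xi|\le C_\bA|z||\xi|$ gives
\[
|\bA\tilde u|(K) \;\le\; |\bA u|(\Omega) + |\bA v|(K\setminus\overline\Omega) + C_\bA\bigl(\|\mathrm{Tr}_\Omega u\|_{\mathrm{L}^1(\partial\Omega)} + \|\mathrm{Tr}_{K\setminus\overline\Omega} v\|_{\mathrm{L}^1(\partial\Omega)}\bigr) < \infty,
\]
with the trace terms finite by Theorem \ref{thmtrace}. Combined with $\tilde u\in \mathrm{L}^1(K;V)$ this gives $\tilde u\in\mathrm{BV}^{\bA}(K)$.

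The heart of the argument, and the main obstacle, is establishing the Green formula above: it relies decisively on the existence of $\mathrm{L}^1$-traces from \emph{both} sides of $\partial\Omega$, which is precisely what $\bC$-ellipticity delivers through Theorem \ref{thmtrace}. Without $\bC$-ellipticity, the surjectivity and strict continuity of $\mathrm{Tr}$ fail, the interfacial jump $\mathrm{Tr}_\Omega u - \mathrm{Tr}_{K\setminus\overline\Omega} v$ need not be meaningfully defined, and the gluing construction breaks — precisely the phenomenon that forces the $\bC$-ellipticity hypothesis.
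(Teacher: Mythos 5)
The paper itself offers no proof of this statement --- it is quoted directly from the literature (Cor.\ 4.21 in Gmeineder--Raita, building on Breit--Diening--Gmeineder) --- so there is nothing internal to compare against; your argument is, in substance, the standard proof given in those references: a two-sided Gauss--Green identity whose interfacial terms are controlled by the $\mathrm{L}^1(\partial\Omega)$-norms of the traces, which is exactly where $\bC$-ellipticity enters via Theorem \ref{thmtrace}. The computation of the jump term $(\mathrm{Tr}_{\Omega}u-\mathrm{Tr}_{K\setminus\overline{\Omega}}v)\otimes_{\bA}\nu_{\Omega}$ and the resulting total-variation bound are correct. Two small points you gloss over: the base case of your Green formula needs test maps smooth \emph{up to} the boundary, whereas Theorem \ref{compactn}(c) only gives interior-smooth approximants, so one must interpose a dilation/exhaustion step on the Lipschitz domain before passing to the strict limit; and the trace theorem must also be applicable on the exterior piece, i.e.\ one tacitly uses that $K\setminus\overline{\Omega}$ is again a bounded Lipschitz domain. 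Neither affects the validity of the approach.
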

\begin{theorem}\label{areastr}
Consider the assertions of Theorem \ref{glue} and take $u, v$ in $\mathrm{BV^{\bA}}(K)$ such that $u = u_0$ on $K \setminus \Omega$. Then there exists a sequence $\{u_j\} \subset u_0 + C^{\infty}_c(\Omega; V)$ such that $u_j \rightarrow u$ in the area-strict topology.   
\end{theorem}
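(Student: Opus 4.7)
The plan is to reduce the statement to an approximation of the deviation $w := u - u_0$ by smooth, compactly supported maps inside $\Omega$, via a cutoff-and-mollify construction, and then to verify the area-strict convergence using Reshetnyak-type continuity combined with a trace-Poincar\'e estimate for $\bC$-elliptic operators.

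First I would set $w := u - u_0 \in \mathrm{BV}^{\bA}(K)$; since $u \equiv u_0$ on $K \setminus \Omega$, we have $w \equiv 0$ there. Combining Theorems \ref{thmtrace} and \ref{glue}, this forces $\mathrm{Tr}(w|_{\Omega}) = 0$ on $\partial \Omega$ and ensures that $\bA w$ is a finite Radon measure supported in $\overline{\Omega}$ placing no mass on $\partial\Omega$. For $\varepsilon_j \downarrow 0$, I would pick cutoffs $\eta_j \in C^{\infty}_c(\Omega; [0,1])$ equal to $1$ on the interior slab $\Omega_{2\varepsilon_j} := \{x \in \Omega : \mathrm{dist}(x, \partial\Omega) > 2\varepsilon_j\}$, with $\mathrm{supp}(\eta_j) \Subset \Omega_{\varepsilon_j}$ and $\|\nabla \eta_j\|_{\infty} \lesssim \varepsilon_j^{-1}$, and then set $\phi_j := (\eta_j w) * \rho_{\delta_j}$ for a symmetric mollifier at scale $\delta_j := \varepsilon_j/4$. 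By construction $\phi_j \in C^{\infty}_c(\Omega; V)$ and $u_j := u_0 + \phi_j \in u_0 + C^{\infty}_c(\Omega; V)$. The $L^1$-convergence $u_j \to u$ on $\Omega$ follows from the decomposition $\phi_j - w = [(\eta_j w) * \rho_{\delta_j} - \eta_j w] + (\eta_j - 1) w$, handled by standard mollification and dominated convergence respectively; this in turn yields $\bA u_j \rightharpoonup^{*} \bA u$ as $W$-valued Radon measures, and by Theorem \ref{compactn}(a) applied to the $W \times \bR$-valued measure $(\bA u_j, \mathscr{L}^n)$ one obtains the $\liminf$ inequality required for area-strict convergence.

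The substantive part is the matching $\limsup$ bound. Via the product rule (\ref{prdctrul}), $\bA \phi_j = (\eta_j \bA w) * \rho_{\delta_j} + (w \otimes_{\bA} \nabla \eta_j) * \rho_{\delta_j}$. The first piece is a genuine mollification of $\bA w$ on the bulk $\Omega_{3\varepsilon_j}$ and, together with the fact that $|\bA w|$ does not charge $\partial\Omega$, gives the correct area contribution on the bulk through standard mollification estimates. The main obstacle I anticipate is controlling the cutoff-induced term $(w \otimes_{\bA} \nabla\eta_j) * \rho_{\delta_j}$, whose $L^1$-mass is bounded by $C \varepsilon_j^{-1}\int_{\Omega\setminus\Omega_{2\varepsilon_j}}|w|\,\mathrm{d}x$. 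Here $\bC$-ellipticity enters decisively: the trace-Poincar\'e/Hardy-type machinery developed in \cite{Breit2010, Gmeineder2019a} yields, for zero-trace $\mathrm{BV}^{\bA}$-maps, an estimate of the form $\int_{\Omega\setminus\Omega_t}|w|\,\mathrm{d}x \lesssim t\,|\bA w|(\Omega\setminus\Omega_{t/2})$. Since $|\bA w|$ is finite and $|\bA w|(\Omega\setminus\Omega_{t/2}) \to 0$ as $t \to 0$, dividing by $\varepsilon_j$ produces the required $o(1)$-decay. Assembling the bulk contribution with the vanishing cutoff error then gives strict convergence $|(\bA u_j, \mathscr{L}^n)|(\Omega) \to |(\bA u, \mathscr{L}^n)|(\Omega)$, from which area-strict convergence $u_j \to u$ follows via Reshetnyak's continuity theorem. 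The hardest and most paper-specific point of the argument is precisely the Hardy-type control above, which relies on the full strength of $\bC$-ellipticity through the trace theory of Section \ref{Fnsp} and does not carry over to merely $\bR$-elliptic operators.
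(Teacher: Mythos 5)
The paper does not actually prove Theorem \ref{areastr}: it is stated without proof as part of the trace/gluing package imported from \cite{Breit2010, Gmeineder2019a}, so there is no in-paper argument to compare against. Judged on its own terms, your cutoff-and-mollify construction is the standard route to this kind of statement and the overall architecture is sound: reduce to $w=u-u_0$ with $\mathrm{Tr}(w)=0$ and $|\bA w|(\partial\Omega)=0$, mollify $\eta_j w$ at a scale subordinate to the cutoff, get weak* convergence plus the Reshetnyak $\liminf$ for the extended measure $(\bA u_j,\mathscr{L}^n)$, and fight for the matching $\limsup$. You have also correctly located the crux in the cutoff error $w\otimes_{\bA}\nabla\eta_j$ and in the fact that its control is exactly where $\bC$-ellipticity (through the trace theory) is indispensable.

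Two points deserve more care. First, the localized Hardy-type inequality you invoke, $\int_{\Omega\setminus\Omega_t}|w|\,\mathrm{d}x\lesssim t\,|\bA w|(\Omega\setminus\Omega_{t/2})$, is stronger than what the cited trace theory visibly provides, and for general $\bA$ one cannot derive it by integrating along normal segments as in the $\mathrm{BV}$ case (that is precisely the Ornstein obstruction). What you actually need, and what does follow from the $L^1$-attainment of the trace for $\bC$-elliptic operators, is the qualitative boundary-layer statement $t^{-1}\int_{\Omega\setminus\Omega_t}|w|\,\mathrm{d}x\to 0$ when $\mathrm{Tr}(w)=0$; you should cite or prove that version rather than the quantitative one. (Alternatively, the classical Anzellotti--Giaquinta device of inner translations/dilations combined with a partition of unity avoids the Hardy estimate altogether and only uses $|\bA w|(\partial\Omega)=0$.) Second, in the $\limsup$ bound you treat $(\eta_j\bA w)*\rho_{\delta_j}$ as the main term, but $\bA u_j=\bA u_0+\bA\phi_j$ and $\bA u_0$ is merely an $L^1$ density that is not being mollified; you need the small additional lemma that if $\mu_j\to\mu$ area-strictly and $g\in\mathrm{L}^1$ is fixed, then $\mu_j+g\mathscr{L}^n\to\mu+g\mathscr{L}^n$ area-strictly (again a Reshetnyak-continuity argument on the augmented measure). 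With these two repairs the proof goes through.
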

\noindent
There is a intriguing question of more general interest going beyond the scope of this discourse whether the $\mathrm{L}^p$ norm of the gradient can be controlled by the $\mathrm{L}^p$ norm of a differential operator $\bA$. For instance globally if $p>1$ and $\bA$ is just $\bR$-$elliptic$ this is true and is a direct consequence of Calder\'on-Zygmund theory \cite{Calderon1952, Stein2016}. However the parallel conclusions in the limiting case $p=1$ are constrained by the previously mentioned Ornstein's non-inequality. In the local regime the presence of generalised Poincar\'e inequalities have been devised and extensively studied dating back among others to Aronszajn \cite{Aronszajn1954}, Smith \cite{Smith1970} or Reshetnyak \cite{Reshetnyak1970} and later elaborated in the weighted setting by Ka\l amajska \cite{Kaamajska1994} with the importance of $\bC$-ellipticity condition on $\bA$ being present in disguise within their works. In the paper \cite{Gmeineder2019a} authors deliver a transparent view on a variety of Poincar\'e type estimates and we give one such variation suited to our setting which is also reminiscent of Korn's inequality for symmetric gradient: 
\begin{proposition}{\normalfont{(Poincar\'e and Korn-type inequalities)}}\label{Poinc}
Let $\bA$ be a $\bC$-elliptic operator of order one and let $\Omega \subset \bR^n$ be a star-shaped domain w.r.t. a ball. Then given $u \in C^{\infty}(\Omega; V)$ such that $\|\bA u\|_{\mathrm{L}^p(\Omega)} < \infty$ there exists an element in the nullspace of $\bA$, $a \in \ker(\bA)$ such that   
\begin{equation*}
\|u - a\|_{\mathrm{L}^p(\Omega)} \leq C_{p, \Omega} \| \bA u \|_{\mathrm{L}^p(\Omega)}
\end{equation*}
where $1 \leq p \leq \infty$. If $1<p<\infty$, then also
\begin{equation*}
\|\nabla(u - a)\|_{\mathrm{L}^p(\Omega)} \leq C_{p, \Omega} \| \bA u \|_{\mathrm{L}^p(\Omega)}.
\end{equation*} 
\end{proposition}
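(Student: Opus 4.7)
The plan is to combine three ingredients: the finite-dimensionality of $\ker(\bA;\Omega)$ guaranteed by the $\bC$-ellipticity characterization (\ref{nullspA}), a Calder\'on--Zygmund type representation of $\nabla u$ in terms of $\bA u$ coming from a Mikhlin--H\"ormander multiplier, and a soft compactness argument that finalizes the $L^p$ bound and handles the endpoints $p=1,\infty$.

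First I would fix a projection onto the nullspace. Writing $N := \dim \ker(\bA;\Omega) < \infty$, pick a basis $v_1,\dots,v_N$ of $\ker(\bA)$ and, by Hahn--Banach, a biorthogonal family of continuous functionals $\ell_1,\dots,\ell_N$ on $\mathrm{L}^p(\Omega;V)$; set $\pi(u) := \sum_{i=1}^N \ell_i(u)\, v_i$. This gives a continuous linear retraction $\pi:\mathrm{L}^p(\Omega;V)\to \ker(\bA)$, so the natural candidate is $a := \pi(u)$.

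For the gradient estimate in the reflexive range $1<p<\infty$, observe that $\bC$-ellipticity in particular implies injectivity of the symbol $\bA[\xi]:V+iV\to W+iW$ for every $\xi\in\bR^n\setminus\{0\}$, so $M(\xi) := (\bA[\xi]^*\bA[\xi])^{-1}\bA[\xi]^*$ is smooth and homogeneous of degree $-1$ there, and $m(\xi) := i\xi\otimes M(\xi)$ is a Mikhlin--H\"ormander multiplier of order zero. The associated singular integral $T$ is bounded on $\mathrm{L}^p(\bR^n; V\otimes\bR^n)$ for $1<p<\infty$ and satisfies $\nabla\varphi = T(\bA \varphi)$ for every $\varphi\in C^\infty_c(\bR^n;V)$. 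To localize this to the star-shaped $\Omega$, I would cover $\Omega$ by finitely many concentric star-shaped subdomains, choose smooth cut-offs $\eta$ supported inside each piece, apply the product rule (\ref{prdctrul}) to write $\bA(\eta(u-\pi u)) = \eta\,\bA u + (u-\pi u)\otimes_{\bA}\nabla\eta$, feed this into $T$, and absorb the lower-order commutator term $(u-\pi u)\otimes_{\bA}\nabla\eta$ using the $\mathrm{L}^p$-bound for $u-\pi u$ proved in the next step.

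The $\mathrm{L}^p$ bound itself I would obtain by a standard contradiction--compactness argument valid uniformly for $1\leq p\leq\infty$. Assume the inequality fails: there exists a sequence $u_k\in C^\infty(\Omega;V)$ with $\pi u_k = 0$, $\|u_k\|_{\mathrm{L}^p(\Omega)}=1$, and $\|\bA u_k\|_{\mathrm{L}^p(\Omega)}\to 0$. By the Rellich--Kondrachov-type compact embedding of $\mathrm{W}^{\bA,p}(\Omega)$ into $\mathrm{L}^p(\Omega;V)$ on Lipschitz domains (itself a consequence of $\bC$-ellipticity, cf.\ the machinery behind Theorem \ref{compactn}), a subsequence converges in $\mathrm{L}^p$ to some $u_\infty$ with $\bA u_\infty = 0$ distributionally, hence $u_\infty\in\ker(\bA)$. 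Continuity of $\pi$ then yields $\pi u_\infty = 0$, so $u_\infty = 0$, contradicting $\|u_\infty\|_{\mathrm{L}^p}=1$. The main obstacle is the step-two interplay between the whole-space multiplier estimate and the cut-off scheme on $\Omega$: this is where $\bC$-ellipticity enters twice, through the algebraic symbol inverse producing the multiplier and through finite-dimensionality of $\ker(\bA)$ making $\pi$ well-defined and the compactness argument go through.
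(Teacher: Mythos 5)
Your overall architecture (finite-dimensional kernel $\Rightarrow$ projection $\pi$, compactness for the zeroth-order bound, singular integrals for the gradient bound) is reasonable, but there is a concrete gap in the gradient estimate. The multiplier identity $\nabla\varphi = T(\bA\varphi)$ is a whole-space statement and only applies to $\varphi\in C^\infty_c(\bR^n;V)$; consequently your cut-off scheme forces $\eta$ to vanish near $\partial\Omega$ (a finite collection of subdomains each compactly contained in $\Omega$ cannot exhaust $\Omega$ up to the boundary, and if a cut-off does not vanish near $\partial\Omega$ then $\eta(u-\pi u)$ is not an admissible input for $T$). What you obtain is therefore only the interior estimate $\|\nabla(u-a)\|_{\mathrm{L}^p(K)}\leq C_K\|\bA u\|_{\mathrm{L}^p(\Omega)}$ for $K\Subset\Omega$, with $C_K$ blowing up as $K\nearrow\Omega$, whereas the Proposition asserts the bound on all of $\Omega$. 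Closing this requires either an extension operator for $\mathrm{W}^{\bA,p}$ (whose usual proof already presupposes the Korn inequality, so this is circular) or a domain-adapted representation. The latter is exactly the paper's route: it invokes Smith's integral representation formula on star-shaped domains, which yields $\|\nabla u\|_{\mathrm{L}^p(\Omega)}\leq C(\|u\|_{\mathrm{L}^p(\Omega)}+\|\bA u\|_{\mathrm{L}^p(\Omega)})$ for $1<p<\infty$ globally on $\Omega$, and then concludes by substituting $u-a$ and applying the first inequality (the first inequality itself being quoted from \cite[Prop.~4.2]{Gmeineder2019a}). The star-shapedness hypothesis, which plays no role in your argument, is precisely what makes Smith's representation work up to the boundary.

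A secondary caution concerns your compactness step. The contradiction argument itself is sound (and $\pi u_k=0$, $\pi u_\infty=u_\infty$ for $u_\infty\in\ker(\bA)$ are handled correctly), but the compact embedding $\mathrm{W}^{\bA,p}(\Omega)\hookrightarrow\hookrightarrow\mathrm{L}^p(\Omega;V)$ for \emph{all} $1\leq p\leq\infty$ is not an innocuous black box: in the cited literature it is derived from the very representation-formula/Poincar\'e machinery you are trying to prove, so you should either cite it explicitly (for $p=1$ it can be extracted from Theorem \ref{compactn}(b)) or accept that your proof is not more elementary than the paper's one-line appeal to \cite{Gmeineder2019a}. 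The $p=\infty$ endpoint in particular needs an argument for convergence up to the boundary, which you do not supply.
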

\begin{proof}
The proof of the first inequality is a special case of a more general result in \cite[Prop. 4.2]{Gmeineder2019a}. For the second one however in \cite{Smith1970} it is shown that
\begin{equation}
\|\nabla u\|_{\mathrm{L}^p(\Omega)} \leq C(\|u\|_{\mathrm{L}^p(\Omega)} + \|\bA u\|_{\mathrm{L}^p(\Omega)}).
\end{equation}
Now replace $u$ by $u-a$ and employ the first inequality.
\end{proof}
\noindent
Substantial step in when verifying these estimates relies on a norm equivalence in the virtue of Bramble-Hilbert lemma and $\mathrm{L}^2$-projections for the construction of $a$ referred to as the corrector. Hence it is vital here that $\ker (\bA)$ remains finite-dimensional. However to remain clear about the matter let us notice that the conclusions of Proposition \ref{Poinc} actually still hold when the $\bC$-ellipticity requirement is weakened. In \cite{Fuchs2011} such model of inequalities is deduced for functions defined on the unit disc.\\
We additionally give a result illustrating the crucial embedding into Besov scales which permit one to extrapolate fractional differentiability properties of $\mathrm{BV}^{\bA}$-maps. 
\begin{theorem}\label{embb}
For an open $\Omega \subset \bR^n$, any $0<s<1$ and $\bA$ being $\bR$-elliptic meaning the symbol $\bA[\xi]$ is injective for all $\xi \in \bR^n \setminus \{0\}$, there is a continuous embeddings in the norm topology $\mathrm{BV}^{\bA}(\Omega) \hookrightarrow (\mathrm{B}^{s}_{1,\infty})_{\mathrm{loc}}(\Omega)$.
\end{theorem}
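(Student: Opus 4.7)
The plan is to reduce to a compactly supported map on $\bR^n$ and then exploit a Riesz-type integral representation of $v$ in terms of $\bA v$ afforded by the $\bR$-ellipticity of $\bA$. First, fix $\Omega' \Subset \Omega$ and pick $\eta \in C^{\infty}_c(\Omega)$ with $\eta \equiv 1$ on $\Omega'$. Set $v := \eta u$; the product rule (\ref{prdctrul}) gives
\begin{equation*}
\bA v = \eta\, \bA u + u \otimes_{\bA} \nabla \eta,
\end{equation*}
so $\bA v$ is a finite Radon measure on $\bR^n$ with compact support and $|\bA v|(\bR^n) + \|v\|_{\mathrm{L}^1(\bR^n)} \leq C_{\eta}\, \|u\|_{\mathrm{BV}^{\bA}(\Omega)}$.

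Second, $\bR$-ellipticity means $\bA[\xi]$ is injective for every $\xi \in \bR^n \setminus \{0\}$, so the composition $\bA[\xi]^{*}\bA[\xi]$ is positive definite there. Define the Fourier multiplier
\begin{equation*}
m(\xi) := \big(\bA[\xi]^{*} \bA[\xi]\big)^{-1}\bA[\xi]^{*},
\end{equation*}
which is smooth on $\bR^n \setminus \{0\}$ and positively homogeneous of degree $-1$. Its inverse Fourier transform $K := \mathcal{F}^{-1} m$ is then a matrix-valued tempered distribution, smooth off the origin and homogeneous of degree $1-n$, satisfying the Calder\'on--Zygmund type pointwise bounds $|K(x)| \leq C|x|^{1-n}$ and $|\nabla K(x)| \leq C|x|^{-n}$. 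Approximating $v$ by mollifications $v_{\varepsilon} := v * \rho_{\varepsilon}$, one verifies $v_{\varepsilon} = K * \bA v_{\varepsilon}$ pointwise from the Fourier-side identity $\hat v_{\varepsilon}(\xi) = m(\xi)\widehat{\bA v_{\varepsilon}}(\xi)$; no kernel ambiguity arises because the only compactly supported $\mathrm{L}^1$ element in $\ker \bA$ is zero (injectivity of $\bA[\xi]$ on $\bR^n \setminus \{0\}$ plus continuity of the Fourier transform on $\mathrm{L}^1$ force $\hat v \equiv 0$ whenever $\bA v \equiv 0$). Letting $\varepsilon \downarrow 0$ yields $v = K * \bA v$ in $\mathrm{L}^1_{\mathrm{loc}}(\bR^n)$.

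Third, I would estimate the finite differences directly. Fix $R > 0$ large enough that $\Omega' - \supp(\bA v) \subset B_R$. Splitting the integral over the two regions $\{|z| \leq 2|h|\}$ and $\{2|h| \leq |z| \leq R\}$ and inserting the pointwise bounds on $K$ and $\nabla K$ respectively gives
\begin{equation*}
\int_{B_R} |\tau_h K(z)|\,\mathrm{d}z \leq C |h|\big(1 + |\log |h||\big), \qquad 0 < |h| < R/2.
\end{equation*}
Since $\tau_h v = (\tau_h K) * \bA v$, Young's inequality for convolution of an $\mathrm{L}^1$ function with a finite Radon measure yields
\begin{equation*}
\|\tau_h v\|_{\mathrm{L}^1(\Omega')} \leq C|h|\big(1 + |\log |h||\big)\,|\bA v|(\bR^n) \leq C_s\, |h|^{s}\, \|u\|_{\mathrm{BV}^{\bA}(\Omega)}
\end{equation*}
for every $0 < s < 1$ and every sufficiently small $|h|$; the large-$|h|$ regime is handled by the trivial bound $\|\tau_h v\|_{\mathrm{L}^1} \leq 2\|v\|_{\mathrm{L}^1}$. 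Since $v \equiv u$ on $\Omega'$, this is precisely the local $\mathrm{B}^{s}_{1,\infty}$-seminorm bound claimed.

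The subtlest step I expect is the rigorous justification of the integral representation $v = K * \bA v$ when $\bA v$ is merely a Radon measure and $\bA$ is only $\bR$-elliptic rather than $\bC$-elliptic: the distributional kernel of $\bA$ on $\bR^n$ may be infinite-dimensional (think of the holomorphic functions for the Cauchy--Riemann operator in the plane). What rescues the argument is the combination of compact support and $\mathrm{L}^1$ integrability of $v$ together with injectivity of the symbol on $\bR^n \setminus \{0\}$, which forces any putative correction term in $\ker \bA$ to vanish. Once this identity is in place the kernel estimate is classical and the remainder of the proof is bookkeeping.
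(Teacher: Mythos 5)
Your proof is correct, but it takes a genuinely different route from the paper's. The paper localises via the extension operator $E_{\Omega}$ and smooth area-strict approximation, and then invokes as a black box the Nikolskij-type estimate of Gmeineder--Raita (\cite[Lemma 4.7]{Gmeineder2019a}) for compactly supported $\mathrm{W}^{\bA,1}$-maps, passing to general $\mathrm{BV}^{\bA}$-maps by lower semicontinuity of the Besov seminorm along a pointwise a.e.\ convergent regularising sequence. You instead unpack that black box: cutoff localisation via the product rule (which works for arbitrary open $\Omega$, with no Lipschitz hypothesis and no extension operator), the Riesz-type representation $v = K \ast \bA v$ with $K = \mathcal{F}^{-1}\big[(\bA[\xi]^{*}\bA[\xi])^{-1}\bA[\xi]^{*}\big]$ homogeneous of degree $1-n$, and the classical $\|\tau_h K\|_{\mathrm{L}^1(B_R)} \lesssim |h|(1+|\log|h||)$ estimate. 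A further advantage of your route is that the convolution representation applies directly when $\bA v$ is a finite measure, so the smooth-approximation step of the paper is bypassed entirely. One small imprecision: your stated reason for the absence of a kernel ambiguity (``the only compactly supported $\mathrm{L}^1$ element of $\ker\bA$ is zero'') does not quite apply to the actual correction term $w := v_{\varepsilon} - K \ast \bA v_{\varepsilon}$, since $K \ast \bA v_{\varepsilon}$ is not compactly supported. The correct, equally standard argument is that $\widehat{w}$ is supported at the origin, so $w$ is a polynomial, and since both $v_{\varepsilon}$ and $K \ast \bA v_{\varepsilon}$ vanish at infinity (the latter decaying like $|x|^{1-n}$), that polynomial is zero. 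With that one-line repair the argument is complete.
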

\begin{proof}
First we argue for $u \in C^{\infty}_c(\bR^n; V)$. Let $B \Subset \Omega$ be a ball. The functional relation is fundamentally incited bu the Nikolskij-type estimate \cite[Lemma 4.7]{Gmeineder2019a} which asserts that with the hypotheses of Theorem \ref{embb} for a fixed $R > 0$ there exists a constant $c_{s, R} > 0$ such that $\forall v \in \mathrm{W^{\bA, 1}}(\bR^n)$ with $\supp\, v \subset B_R(0)$
\begin{equation}
\|v(\cdot + h) - v \|_{\mathrm{L}^1(\bR^n)} \leq c_{s, R}\| \bA v\|^p_{\mathrm{L}^1(B_R(0))}|h|^{sp}
\end{equation}
for any $p < \frac{n}{n-1+s}$. It then follows immediately that $|u|_{\mathrm{B}^{s}_{1,\infty}(B)} \leq c_{s, R}\| \bA u\|_{\mathrm{L}^1(\bR^n)}$.\\
For a general $u\in \mathrm{BV^{\bA}}(\Omega)$ take the global extension $E_{\Omega}u$ (Theorem \ref{compactn} (d)) and find a sequence $\{v_k\} \in (C^{\infty}\cap \mathrm{BV}^{\bA})(\bR^n)$ such that $v_k \rightarrow E_{\Omega}u$ strictly. Now for $\psi \in C^{\infty}_c(\bR^n)$ such that $\chi_{B} \leq \psi \leq \chi_{2B}$ define $\psi_k(x) = \psi(2^{-k}x)$ and $u_k := \psi_k v_k \in C^{\infty}_c(\bR^n)$. Observe that $u_k \rightarrow E_{\Omega}u$ strictly as well and in particular $u_k  \rightarrow u$ pointwise a.e. up to extraction of a subsequence. Hence
\begin{equation*}
\begin{aligned}
|u|_{\mathrm{B}^{s}_{1,\infty}(B)} &\leq \liminf_{k\rightarrow \infty} |u_k|_{\mathrm{B}^{s}_{1,\infty}(B)} \leq C \liminf_{k\rightarrow \infty} \|\bA u_k\|_{\mathrm{L}^1(\bR^n)}\\
&\leq C\liminf_{k\rightarrow \infty} \|(E_{\Omega} u)\|_{\mathrm{BV}^{\bA}(\bR^n)} \leq C\|u\|_{\mathrm{BV}^{\bA}(\Omega)}
\end{aligned}
\end{equation*}
and this concludes the proof.
\end{proof}
\noindent
In actuality one can strive for a sharper bound in the fractional scales of such kind. For instance at the level of global inequalities Van Schaftingen \cite[Thm 8.1]{VanSchaftingen2011} points out that the necessary additional assumption is the $cocancelling$ property of the operator $\bA$. Simultaneously for domains which is quintessential from the regularity perspective,  from \cite[Thm 1.3]{Gmeineder2019a} we see that taking $\bA$ to have finite-dimensional nullspace one obtains the embedding
\begin{equation*}
\mathrm{W}^{\bA, 1}(B) \hookrightarrow \mathrm{W}^{s,\frac{n}{n-1+s}}(B)
\end{equation*}
for all balls $B \subset \bR^n$. Precisely, the authors influenced by Van Schaftingen's theoy provide an equivalence relation between such estimates and the $\dim \ker(\bA; \Omega) < \infty$ condition.
\section{Relaxation of the functional and the main regularity theorem}\label{Sectionrel}
\noindent
Throughout there rest of the paper we set once and for all $\bA$ to be a $\bC$-$elliptic$ operator of order one. In order to regard corresponding integral functionals on $\mathrm{BV}^{\bA}$ maps we ought to constitute an integrand which could act on measures. It is normally achieved by introducing so called perspective function which is in a way a homogenisation of the original integrand. We allow ourselves though to diverge our attention from that and instead plunge right into the desired formula. Thus motivated by the Lebesgue-Radon-Nikodym decomposition w.r.t. the Lebesgue measure (\ref{Lebdec}) in the spirit of BV and BD theory \cite{Gmeineder2019d}:
\begin{equation*}
	\bA u = \bA^a u + \bA^s u = \bA [\nabla] u + \frac{\mathrm{d}\bA^s u}{\mathrm{d}|\bA^s u|}\mathrm{d}|\bA^s u|,
\end{equation*}
we contemplate the following relaxed functional on maps $u$ belonging to $\mathrm{BV}^{\bA}(\Omega)$
\begin{equation*}
\begin{aligned}
\mathscr{F}_{u_0}^*[u;\Omega] = \int_{\Omega}^{} f(\bA [\nabla] u) \hspace*{0.1cm}\mathrm{d}\mathscr{L}^{n} &+ \int_{\Omega}^{} f^{\infty}\Big(\frac{\mathrm{d}\bA^s u}{\mathrm{d}|\bA^s u|}\Big) \hspace*{0.1cm}\mathrm{d}|\bA^s u|\\
&+ \int_{\partial \Omega}f^{\infty}\Big(\mathrm{Tr}(u-u_0) \otimes_{\bA} \nu_{\partial\Omega}\Big) \hspace*{0.1cm}\mathrm{d} \mathscr{H}^{n-1}.
\end{aligned}
\end{equation*}
where $f^{\infty}(x) := \lim_{t\rightarrow0+}tf(z/t)$ is the usual recession function that captures the behaviour of the integrand at infinity. The above integral representation is inspired by the contributions in \cite{Anzellotti1985, Giaquinta1979, Demengel1984}. The properties (1)-(3) of $f$ mentioned at the very beginning of the Section \ref{intro} ensure that this functional is well-defined and bounded below. Crucially we have got to emphasize that the existence of the last term in the above definition is attributed to the trace class of $\mathrm{BV}^{\bA}$ maps.\\
To comprehend the continuity notions within such constructions we invoke a classical result due to Reshetnyak \cite{Reshetnyak1968a}:
\begin{theorem}$\mathrm{(Reshetnyak)}$\label{Reshtk}
Let $\mu,\mu_1,\mu_2,\dots \in \cM(\Omega; W)$ be a sequence of signed Radon measures taking values in a closed and convex cone $C \subset W$ and $g:C \rightarrow [0, \infty]$ a measurable function. Then we have the following:
\begin{enumerate}
\item if $\mu_k \rightharpoonup^{*} \mu$ and $g$ is lower semicontinuous, convex and 1-homogeneous, then
\begin{equation*}
\int_{\Omega} g\Big(\frac{\mathrm{d}\mu}{\mathrm{d}|\mu|}\Big)\mathrm{d}|\mu| \leq \liminf_{k\rightarrow \infty} \int_{\Omega} g\Big(\frac{\mathrm{d}\mu_k}{\mathrm{d}|\mu_k|}\Big)\mathrm{d}|\mu_k|.
\end{equation*}
\item If in turn $g$ is continuous, 1-homogeneous and $\mu_k \rightarrow \mu$ strictly, then
\begin{equation*}
\lim_{k\rightarrow \infty}\int_{\Omega} g\Big(\frac{\mathrm{d}\mu_k}{\mathrm{d}|\mu_k|}\Big) \mathrm{d}|\mu_k|=  \int_{\Omega} g\Big(\frac{\mathrm{d}\mu}{\mathrm{d}|\mu|}\Big)\mathrm{d}|\mu|.
\end{equation*}
\end{enumerate}
\end{theorem}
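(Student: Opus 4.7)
The plan is to handle both parts through a common duality / lifting framework. For (1), the key structural fact is that a convex, lower semicontinuous, positively $1$-homogeneous function $g: C \to [0,\infty]$ is the support function of the closed convex set
\[
K_g := \{\phi \in W^{*} : \langle \phi, z \rangle \leq g(z) \text{ for all } z \in C\},
\]
so that $g(z) = \sup_{\phi \in K_g}\langle \phi, z\rangle$ for every $z \in C$. For (2), the additional ingredient is that $g$ restricted to the compact set $S_C := C \cap \{|z|=1\}$ is bounded and continuous.

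For part (1), I would first establish the variational identity
\[
\int_\Omega g\!\left(\frac{d\nu}{d|\nu|}\right) d|\nu| = \sup\Big\{ \int_\Omega \langle \phi, d\nu\rangle : \phi \in C_c(\Omega; W^{*}),\ \phi(x) \in K_g \text{ for all } x\Big\}
\]
valid for any $C$-valued Radon measure $\nu$ on $\Omega$. The ``$\geq$'' direction is immediate from $\langle \phi(x), \theta(x)\rangle \leq g(\theta(x))$ with $\theta = d\nu/d|\nu|$. For ``$\leq$'', pick a countable dense family $\{\phi_i\} \subset K_g$ realising $g = \sup_i \langle \phi_i, \cdot\rangle$ (available by separability of $W^{*}$), define a measurable almost-optimal selector $x \mapsto \phi_{i(x)}$, and then upgrade this simple $K_g$-valued selector to a continuous $C_c(\Omega; K_g)$ function using Lusin's theorem combined with Dugundji's extension theorem (valid since $K_g$ is closed convex). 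With this identity in hand, the lower semicontinuity assertion is immediate: for any admissible $\phi$, weak-$*$ convergence yields $\int\langle \phi, d\mu\rangle = \lim_k \int \langle \phi, d\mu_k\rangle$, while the trivial ``$\geq$'' direction applied to each $\mu_k$ gives $\int\langle \phi, d\mu_k\rangle \leq \int g(d\mu_k/d|\mu_k|)\, d|\mu_k|$. Taking the liminf and then the supremum over $\phi$ concludes (1).

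For part (2), I would lift each measure $\mu_k$ to a positive Radon measure $\hat\mu_k$ on the compact space $\bar\Omega \times S_C$ via
\[
\int \varphi\, d\hat\mu_k := \int_\Omega \varphi\!\left(x, \frac{d\mu_k}{d|\mu_k|}\right) d|\mu_k|,
\]
and similarly for $\hat\mu$; note that $d\mu_k/d|\mu_k| \in S_C$ holds $|\mu_k|$-almost everywhere. I would then show that $\hat\mu_k \to \hat\mu$ narrowly on $\bar\Omega \times S_C$. For test functions $\varphi(x,\xi) = \psi(x)$ with $\psi \in C(\bar\Omega)$, this reduces to narrow convergence $|\mu_k| \to |\mu|$, which follows from weak-$*$ convergence of $|\mu_k|$ combined with $|\mu_k|(\Omega) \to |\mu|(\Omega)$; for $\varphi(x,\xi) = \psi(x)\langle \phi, \xi\rangle$ with $\phi \in W^{*}$ it reduces directly to weak-$*$ convergence of $\mu_k$. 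The algebra generated by these two families contains constants and separates points of $\bar\Omega \times S_C$, so by the Stone--Weierstrass theorem it is uniformly dense in $C(\bar\Omega \times S_C)$. Applying the narrow convergence $\hat\mu_k \to \hat\mu$ to the continuous test function $\varphi(x,\xi) := g(\xi)$ yields exactly the desired equality.

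The main obstacle is the Lusin-type step in part (1): one must upgrade a simple $K_g$-valued selector to a \emph{continuous} and compactly supported $K_g$-valued test function. When $g$ attains the value $+\infty$, the set $K_g$ can be unbounded and this is delicate; a standard remedy is to first truncate via $g_R := \min(g,\, R|\cdot|)$ with $K_{g_R}$ bounded, prove the variational identity at level $R$, and pass to the limit as $R \to \infty$ by monotone convergence. In (2) the only real subtlety is the tightness of $\{\hat\mu_k\}$ on $\bar\Omega \times S_C$, which follows from $\sup_k |\mu_k|(\Omega) < \infty$ and compactness of $S_C$ in finite dimensions.
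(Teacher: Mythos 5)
The paper does not prove this theorem at all: it is invoked as a classical result with a citation to Reshetnyak's 1968 paper (and the standard modern references are Ambrosio--Fusco--Pallara, Thm.\ 2.38--2.39, or Spector's short proofs). So your proposal can only be judged on its own merits. Part (1) is essentially the standard duality proof and is sound in outline, with one technical slip: $g_R := \min(g, R|\cdot|)$ is a minimum of two convex functions and need not be convex, so the ``variational identity at level $R$'' does not apply to it as stated. The correct truncation is the support function of $K_g \cap \overline{B}_R$, i.e.\ $g_R(z) := \sup\{\langle \phi, z\rangle : \phi \in K_g,\ |\phi| \leq R\}$, which is convex, lower semicontinuous, $1$-homogeneous, and increases pointwise to $g$, after which monotone convergence works as you intend.

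Part (2) contains a genuine gap at the Stone--Weierstrass step. You verify $\int \varphi\, d\hat\mu_k \to \int \varphi\, d\hat\mu$ only for the two generating families $\varphi = \psi(x)$ and $\varphi = \psi(x)\langle\phi,\xi\rangle$; this convergence is linear in $\varphi$ but not multiplicative, so it does not propagate to products such as $\psi_1\psi_2\langle\phi_1,\xi\rangle\langle\phi_2,\xi\rangle$, and density of the generated \emph{algebra} is therefore of no use. What you actually control is the action of any subsequential weak-$*$ limit $\hat\lambda$ of $\{\hat\mu_k\}$ on functions affine in $\xi$, and these do not separate measures on $\bar\Omega\times S_C$: they determine only the $x$-marginal and the barycenters $\int_{S_C}\xi\, d\nu_x(\xi)$ of the disintegration $\hat\lambda = \nu_x\otimes\lambda$, not the fibre measures $\nu_x$ themselves. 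The missing argument --- which is the actual content of Reshetnyak's continuity theorem --- is: from the $\psi(x)$ family and strictness deduce $\lambda = |\mu|$ with $\nu_x(S_C)=1$ a.e.; from the $\psi(x)\langle\phi,\xi\rangle$ family deduce $\int\xi\, d\nu_x = \tfrac{d\mu}{d|\mu|}(x)$, a unit vector; then invoke the strict convexity of the Euclidean ball to conclude that a probability measure on the unit sphere whose barycenter has norm one is a Dirac mass, whence $\nu_x = \delta_{d\mu/d|\mu|(x)}$ and $\hat\lambda = \hat\mu$. With that step inserted your lifting argument closes correctly; without it the proof does not identify the limit.
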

\noindent
As a corollary given $f:W \rightarrow \bR$ with properties (1)-(3) in Section \ref{intro} if we take the measure functional
\begin{equation*}
f[\mu; A] := \int_{A} f\Big(\frac{\mathrm{d}\mu}{\mathrm{d}\mathscr{L}^n}\Big) \hspace*{0.1cm}\mathrm{d}\mathscr{L}^n + \int_{A} f^{\infty}\Big(\frac{\mathrm{d}\mu^s}{\mathrm{d}|\mu^s|}\Big) \hspace*{0.1cm}\mathrm{d}|\mu^s| \hspace*{0.5cm} \text{for $\mu \in \cM(\Omega; W)$ , $A \in \mathscr{B}(\Omega)$},
\end{equation*}
then it follows that for for all $u,u_1,u_2, \dots \in \mathrm{BV}^{\bA}(\Omega)$ such that $u_k \stackrel{\langle \cdot \rangle}{\longrightarrow} u$ in $\mathrm{BV}^{\bA}(\Omega)$ we have that $f[\bA u_k; \Omega] \rightarrow f[\bA u; \Omega]$.
\begin{definition}(Generalised minimisers)\label{gmdef}
We say that $u \in \mathrm{BV}^{\bA}(\Omega)$ is a generalised minimiser of the Dirichlet problem (\ref{Dirprob}) if
\begin{equation*}
\mathscr{F}^*_{u_0}[u] \leq \mathscr{F}^*_{u_0}[v] \hspace*{1cm} \forall v \in  \mathrm{BV}^{\bA}(\Omega).
\end{equation*}  
\end{definition}
\noindent
Thanks to the lower semicontinuity of the relaxed functional as well as the required compactness within the $\mathrm{BV}^{\bA}$ spaces we are now able conclude the existence of generalised minimisers. A more general result of the sort is proved in \cite{Breit2010} where the integrand $f$ is assumed to be $\bA$-quasiconvex. We shall state a special version that depicts well the underlying mechanism. 
\begin{theorem}
Let $\Omega \subset \bR^n$ be a bounded domain with $\partial \Omega$ Lipschitz. For an integrand $f:W \rightarrow \bR$ specified in the introductory Section \ref{intro} and a boundary datum $u_0 \in \mathrm{W}^{\bA,1}(\Omega)$ there exists a generalised minimiser $u \in \mathrm{BV}^{\bA}(\Omega)$ as in the Definition \ref{gmdef} which is moreover a weak* limit of a minimising sequence of elements in $\mathscr{D}_{u_0}$. Furthermore in that case
\begin{equation*}
	\inf_{\mathscr{D}_{u_0}}\mathscr{F} = \min_{\mathrm{BV}^{\bA}(\Omega)} \mathscr{F}^*_{u_0} = \mathscr{F}^*_{u_0}[u].
\end{equation*} 
This is often referred to as the 'no gap' result.
\end{theorem}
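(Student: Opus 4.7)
The plan is to run the direct method on a minimising sequence for $\mathscr{F}$ over $\mathscr{D}_{u_0}$ and identify its weak* limit as a minimiser of $\mathscr{F}^*_{u_0}$; the no-gap equality will then fall out of the area-strict density afforded by Theorem \ref{areastr}. Concretely, pick $\{u_k\} \subset \mathscr{D}_{u_0}$ with $\mathscr{F}[u_k] \searrow \inf_{\mathscr{D}_{u_0}} \mathscr{F}$. Linear growth together with convexity yields a lower bound $f(z) \geq c_1 |z| - c_2$, whence $\|\bA u_k\|_{\mathrm{L}^1(\Omega)}$ is uniformly bounded; since $u_k - u_0 \in \mathrm{W}^{\bA,1}_0(\Omega)$, the Poincaré--Korn estimate of Proposition \ref{Poinc} (after a covering of $\Omega$ by star-shaped subdomains) supplies a uniform $\mathrm{L}^1$-bound on $u_k - u_0$, and hence on $u_k$. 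By Theorem \ref{compactn}(b) a subsequence converges weakly* to some $u \in \mathrm{BV}^{\bA}(\Omega)$, which will turn out to be the minimiser.

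The technical heart is to recast the boundary penalisation in $\mathscr{F}^*_{u_0}$ as a singular part of an $\bA$-gradient on a larger domain. Fix a bounded Lipschitz $K$ with $\Omega \Subset K$, extend $u_0$ across $\partial\Omega$ to a $\mathrm{BV}^{\bA}(K)$-map via Theorem \ref{compactn}(d), and, using the gluing Theorem \ref{glue}, define $\tilde u := u \sqcup u_0 \in \mathrm{BV}^{\bA}(K)$ and likewise $\tilde u_k \in \mathrm{BV}^{\bA}(K)$. Because $u_k$ has trace $u_0$, the extension $\tilde u_k$ carries no jump across $\partial\Omega$; the $\bC$-elliptic trace theory of Theorem \ref{thmtrace} shows, on the other hand, that the part of $\bA \tilde u$ concentrated on $\partial\Omega$ equals $\mathrm{Tr}(u - u_0) \otimes_{\bA} \nu_{\partial\Omega} \, \mathrm{d}\mathscr{H}^{n-1}$. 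With the shorthand $C_0 := f[\bA u_0; K \setminus \overline\Omega]$ (a constant depending only on $u_0$ and $K$), one verifies directly from the definition of the measure functional that
\begin{equation*}
f[\bA \tilde u; K] = \mathscr{F}^*_{u_0}[u] + C_0 \quad \text{and} \quad f[\bA \tilde u_k; K] = \mathscr{F}[u_k] + C_0.
\end{equation*}

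For lower semicontinuity on $K$ I would pass to the perspective integrand $\tilde f(z, t) := t f(z/t)$ for $t > 0$, extended by $\tilde f(z, 0) := f^{\infty}(z)$; this is convex, lower semicontinuous and positively $1$-homogeneous on $W \times [0, \infty)$, and the couplings $(\bA \tilde u_k, \mathscr{L}^n)$ on $K$ converge weakly* to $(\bA \tilde u, \mathscr{L}^n)$ as $(W \times \bR)$-valued Radon measures. Reshetnyak's Theorem \ref{Reshtk}(1) applied to $\tilde f$ and these measures therefore gives
\begin{equation*}
\mathscr{F}^*_{u_0}[u] + C_0 = f[\bA \tilde u; K] \leq \liminf_{k \to \infty} f[\bA \tilde u_k; K] = \liminf_{k \to \infty} \mathscr{F}[u_k] + C_0,
\end{equation*}
so $\mathscr{F}^*_{u_0}[u] \leq \inf_{\mathscr{D}_{u_0}} \mathscr{F}$.

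For the matching upper bound I would exploit area-strict smooth approximation: given any $v \in \mathrm{BV}^{\bA}(\Omega)$, extend to $\tilde v := v \sqcup u_0 \in \mathrm{BV}^{\bA}(K)$, and Theorem \ref{areastr} yields $w_j \in u_0 + C^{\infty}_c(\Omega; V) \subset \mathscr{D}_{u_0}$ with $w_j \stackrel{\langle \cdot \rangle}{\longrightarrow} \tilde v$ in $\mathrm{BV}^{\bA}(K)$. The continuity corollary to Theorem \ref{Reshtk}(2) yields $f[\bA w_j; K] \to f[\bA \tilde v; K]$, whence $\mathscr{F}[w_j] \to \mathscr{F}^*_{u_0}[v]$ and consequently $\inf_{\mathscr{D}_{u_0}} \mathscr{F} \leq \mathscr{F}^*_{u_0}[v]$ for every $v \in \mathrm{BV}^{\bA}(\Omega)$. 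Chaining the two inequalities and specialising to $v = u$ closes the triangle, forces $u$ to attain the minimum of $\mathscr{F}^*_{u_0}$, and shows that the original sequence $\{u_k\}$ is itself a weakly* convergent minimising sequence of the required kind. The principal obstacle is justifying the jump formula for $\bA \tilde u$ across $\partial\Omega$ and extending the Reshetnyak machinery from the purely $1$-homogeneous setting to the linear-growth integrand via the perspective construction; both steps lean essentially on the $\bC$-ellipticity of $\bA$ through the strict continuity of $\mathrm{Tr}$ in Theorem \ref{thmtrace}.
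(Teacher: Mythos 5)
Your proposal is in substance the same argument as the paper's: both proofs glue the competitor to a fixed extension of the boundary datum on a larger Lipschitz domain, convert the boundary penalisation into a singular part of the $\bA$-gradient concentrated on $\partial\Omega$, get the lower bound from Reshetnyak lower semicontinuity, and get the matching upper bound from area-strict smooth approximation together with Reshetnyak continuity. The only structural difference is which minimising sequence the direct method is run on: the paper minimises the extended measure functional $f[\bA;B]$ (equivalently $\mathscr{F}^*_{u_0}$) over $\mathrm{BV}^{\bA}(\Omega)$ and then deduces the no-gap identity, whereas you minimise $\mathscr{F}$ over $\mathscr{D}_{u_0}$ directly and read off both conclusions at once. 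Your ordering delivers the clause ``weak* limit of a minimising sequence of elements of $\mathscr{D}_{u_0}$'' immediately, which the paper only obtains implicitly through the no-gap identity; conversely the paper's ordering makes the attainment of $\min\mathscr{F}^*_{u_0}$ over all of $\mathrm{BV}^{\bA}(\Omega)$ the primary object. Both variants rest on the same two unproved-but-cited ingredients: the jump formula for $\bA(u\sqcup u_0)$ across $\partial\Omega$ from the $\bC$-elliptic trace theory (the paper uses it silently in (\ref{proof2.3})), and the extension of Reshetnyak's theorems from $1$-homogeneous integrands to linear growth via the perspective function, which you at least make explicit.

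One intermediate claim is false as written and needs the standard repair. Convexity together with $|f(z)|\le C(1+|z|)$ does \emph{not} imply $f(z)\ge c_1|z|-c_2$ with $c_1>0$: take $f(z)=g(|z|)-\lambda z_1$ with $g(t)=\int_0^t\!\int_0^\tau(1+s^2)^{-a/2}\,\mathrm{d}s\,\mathrm{d}\tau$ and $\lambda>g'(\infty)$, which is convex, $a$-elliptic and of linear growth yet unbounded below along a ray. What is true is that $a$-ellipticity gives, by integrating the Hessian bound twice along segments,
\begin{equation*}
f(z)\;\ge\; f(0)+\langle\nabla f(0),z\rangle + c\,|z| - C ,
\end{equation*}
and the offending linear term is a null Lagrangian on the Dirichlet class: for constant $\xi_0$ and $\varphi\in C^\infty_c(\Omega;V)$ one has $\int_\Omega\langle\xi_0,\bA\varphi\rangle\,\mathrm{d}x=0$ by the divergence theorem, so by density $\int_\Omega\langle\nabla f(0),\bA u\rangle\,\mathrm{d}x$ is the same constant for every $u\in\mathscr{D}_{u_0}$. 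Only after this observation do minimising sequences in $\mathscr{D}_{u_0}$ have uniformly bounded $\|\bA u_k\|_{\mathrm{L}^1(\Omega)}$, which is what your compactness step requires. (The paper is equally silent on coercivity when it extracts a weak*-convergent subsequence from its minimising sequence, so this is a repairable imprecision shared with the source rather than a defect of your route.) With that emendation the argument goes through.
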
  
\begin{proof}
Take a ball $B \Supset \Omega$. By surjectivity of the trace operator we can find $v \in \mathrm{W}^{\bA, 1}(B \setminus\overline{\Omega})$ such that $\mathrm{Tr}_{\partial B}(v) = 0$ and $\mathrm{Tr}_{\partial\Omega}(v) = u_0$. Furthermore the glueing theorem \ref{glue} $\tilde{v} := v \sqcup u \in BV^{\bA}(B)$ and we henceforth fix the notation for such glueing. The measure functional given by
\begin{equation*}
\begin{aligned}
f[\bA; B](\tilde{v}) &:=  \int_{\Omega}^{} f(\bA [\nabla] u) \hspace*{0.1cm}\mathrm{d}\mathscr{L}^{n} + \int_{\Omega}^{} f^{\infty}\Big(\frac{\mathrm{d}\bA^s u}{\mathrm{d}|\bA^s u|}\Big) \hspace*{0.1cm}\mathrm{d}|\bA^s u|\\
&+\int_{\partial \Omega}f^{\infty}\Big(\mathrm{Tr}(u-u_0) \otimes_{\bA} \nu_{\partial\Omega}) \hspace*{0.1cm}\mathrm{d} \mathscr{H}^{n-1} + \int_{B \setminus\overline{\Omega}}^{} f(\bA v) \hspace*{0.1cm}\mathrm{d}\mathscr{L}^{n}
\end{aligned}
\end{equation*}
is well-defined and bounded below on $\mathrm{BV}^{\bA}(B)$. Next, extract a minimising sequence $\{u_j\} \in  \mathrm{BV}^{\bA}(\Omega)$ of $\mathscr{F}^*_{u_0}$. By construction of the functional it also follows that the extension $\tilde{u_j}$ is a minimising sequence of $f[\bA; B](\tilde{u})$ on $\mathrm{BV}^{\bA}(B)$. Now compactness of the $\mathrm{BV}^{\bA}$ space up to a subsequence $u_j \rightarrow^*w$ in $\mathrm{BV}^{\bA}(\Omega)$ and thus $\tilde{u_j}\rightarrow^*\tilde{w}$ in $\mathrm{BV}^{\bA}(B)$. By Reshetnyak's lower semicontinuity $f[\bA; B](\tilde{w}) \leq \liminf_{j\rightarrow \infty}f[\bA; B] (\tilde{u_j}) = \inf f[\bA; B](\mathrm{BV}^{\bA}(B))$ and so $\tilde{w}$ minimises $f[\bA; B]$. Since
\begin{equation}\label{proof2.3}
f[\bA; B](\tilde{u}) = \mathscr{F}^*_{u_0}[u] + \int_{B \setminus\overline{\Omega}}^{} f(\bA v) \hspace*{0.1cm}\mathrm{d}\mathscr{L}^{n}
\end{equation}
$\tilde{w}$ minimises $f[\bA; B]$ if and only if $w$ minimises $\mathscr{F}^*_{u_0}$  which justifies the first part of the theorem. Because $\mathscr{F}^*_{u_0}|_{\mathscr{D}_{u_0}} = \mathscr{F}$ it immediately implies that $\inf \mathscr{F}^*_{u_0}[\mathrm{BV^{\bA}}(\Omega)] \leq \inf \mathscr{F}[\mathscr{D}_{u_0}]$. With regards to the reverse inequality  by the trace preserving smooth approximation there is $\{u_j\} \in u_0 + C^{\infty}_c(\Omega)$ such that $\tilde{u_j} \rightarrow \tilde{u}$ strictly in $\mathrm{BV^{\bA}}$. Then invoking Reshetnyak's continuity theorem
\begin{equation*}
f[\bA; B](\tilde{u}) = \lim_{j\rightarrow\infty} f[\bA; B](\tilde{u_j}) \geq \inf \mathscr{F}[\mathscr{D}_{u_0}] + \int_{B \setminus\overline{\Omega}}^{} f(\bA v) \hspace*{0.1cm}\mathrm{d}\mathscr{L}^{n}
\end{equation*}
since $\mathrm{Tr}_{\partial\Omega}(\tilde{u_j}) = \mathrm{Tr}_{\partial\Omega}(u_0)$ on $\partial\Omega$. On the other hand by (\ref{proof2.3})
\begin{equation}
f[\bA; B](\tilde{u}) = \min \mathscr{F}^*_{u_0}[\mathrm{BV^{\bA}}(\Omega)] + \int_{B \setminus\overline{\Omega}}^{} f(\bA v) \hspace*{0.1cm}\mathrm{d}\mathscr{L}^{n}.
\end{equation}
Subtracting the two equations from each other yields $\inf \mathscr{F}^*_{u_0}[\mathrm{BV^{\bA}}(\Omega)] \geq \inf \mathscr{F}[\mathscr{D}_{u_0}]$ settling the proof.
\end{proof}
\noindent
According to conventions in the literature the collection of all generalised minimisers of such sort is defined by $\mathrm{GM}(\mathscr{F}; u_0)$. Let us mention also that even though in a number of instances strict convexity of the integrand would give uniqueness of minimising elements, the proposed generalised minima are often not unique. Partially this is on the account of the singular part $\bA^s u$ occurrence especially the non-strictly convex $|\bA^s u|(\bar{\Omega})$ and this can be seen already for low dimensions e.g. in the Santi's example \cite{Santi1972}.

\section{Towards the proof of regularity}\label{Section4}
\subsection{Viscosity type argument}\label{viscsection}
Subsequently we come to prove the central theorem of our analysis displaying the local Sobolev regularity of the generalised minimisers: Theorem \ref{thmreg}. As briefly outlined in the introduction the higher Sobolev regularity is going to be extracted by means of a perturbed Euler-Lagrange equation. The primary task is to utilise an appropriate minimising sequence whose $\bA$-gradients are locally uniformly bounded in $\mathrm{L}^p(\Omega; V)$. Principally for convex integrands satisfying \emph{p-growth} for $1<p<2$ it is achieved through the \emph{vanishing viscosity} approach and is essentially carried out by adding Dirichlet energies to the original functional, that is, one considers 
\begin{equation*}
\mathscr{F}_j[u] := \mathscr{F}[u] + \frac{1}{2j}\| \nabla u \|^2_{\mathrm{L^2(\Omega)}}. 
\end{equation*}
Thanks to the Korn-type inequality $\mathscr{F}_j$ is well-defined on $\mathrm{W^{1, 2}}(\Omega)$ and the second term gives the functional a boost in ellipticity. Moreover each $\mathscr{F}_j$ is strictly convex, there exists a unique minimiser and the entire collection of these minima $\{v_j\} \subset \mathrm{W}^{1,p}(\Omega; V)$ constitutes the desired sequence. Eventually uniform estimates $\{v_j\}$ would be reflected in the original minimiser $v$. However translating this argument to $f$ as asserted in Theorem \ref{thmreg} the linear growth causes a couple of uncertainties. Among them is that the appearance of singular part $\bA^s u$ in $\mathscr{F}^*_{u_0}$ is responsible for nonuniqueness of generalised minimisers. Thus even though an adaptation of the above method could potentially work, it would only give the regularity for one specific minimiser. In consequence there is a possibility of some considerably irregular generalised minima being present as such instances have been observed for in BV \cite{Beck2013} as well as in BD \cite{Gmeineder2019}. Striving for a more universal outcomes we shall instead take on a different approach that will yield regularity for all minimisers. To a great degree it relies on an application of the Ekeland variational principle and instead producing for any generalised minimiser $u \in \mathrm{GM}(\mathscr{F}; u_0)$ a perturbed minimising sequence $\{u_j\}$ which eventually converges to $u$ \emph{weakly*} in $\mathrm{BV}^{\bA}(\Omega)$. As elaborated in the $\mathrm{BV}$ context the negative Sobolev space $\mathrm{W}^{-1,1}$ introduced in \cite{Beck2013} and rectified in \cite{Gmeineder2020BD} fits the difference quotient technique well. Despite that, mimicking the same procedure here would amount an expression
\begin{equation*}
 \int_{\Omega} \frac{|\eta(x)\Delta_{i,h}u_j(x)|^2}{(1+|\bA u_j(x)|^2)^{\frac{1}{2}}} \hspace*{0.1cm}\mathrm{d} x
\end{equation*}
for a cut-off map $\eta$, which in view of Ornstein's noninequality and no reassurance of the full gradients of $\mathrm{BV}^{\bA}$maps existing, spark a main difficulty in framing a respective bound. On the other hand taking on fractional estimates which stem from the embedding $\mathrm{BV}^{\bA}(\Omega) \hookrightarrow (\mathrm{B}^{s}_{1,\infty})_{\mathrm{loc}}(\Omega)$ for $0 < s <1$ will avoid having to at some stage control the $\mathrm{L}^1$-norm of the full gradients of $\{u_j\}$. Though this operation in turn compels the ellipticity range to be reduced to $1 < a < 1 + \frac{1}{n}$. Regarding the encompassing space for the Ekeland principle as advertised in the introduction is going to be expressed by $\mathrm{L}^1(\Omega; V)$. Our intention of proposing this very space is predominantly because we seek to set up sufficiently weak norms to be compatible with the initially accounted features of the minimising sequence. Although the full gradients of generalised minimisers are not known in advance to exist as finite Radon measures, the selection of $\mathrm{L}^{1}(\Omega; V)$ along with fractional estimates will neutralise the first order terms and rule out the presence of the difference quotient in the Euler-Lagrange equation. Nevertheless to remain scrupulous, because of $\mathrm{L}^1(\Omega; V) \hookrightarrow \mathrm{W}^{-1,1}(\Omega)$ embedding, the subsequent stabilisation procedure may well be reiterated in $\mathrm{W}^{-1,1}$. Even though this forms a link with the $\mathrm{BV}$ framework in the current scenario the cancellation of partial derivatives by means of the negative Sobolev norm seems a little superfluous. Attempting to illustrate the emerging task if we select a perturbation space $Y$ the crucial step is to control terms of the form
\begin{equation*}
\|v_j(\cdot+he_i)-v_j\|_Y
\end{equation*}
where $v_j$ is a specific map the Euler-Lagrange equation is being tested with. By letting $Y=\mathrm{L}^1$ and inserting a fractional power of parameter $h$ will then  yield a uniform bound after invoking the assertions of Thereom \ref{embb}. Nevertheless in comparison with the Lipschitz dual $(\mathrm{W}^{1, \infty}_0(\Omega))^*$ utilised in \cite{Gmeineder2019}, which also works equally in this setting, the Lebesgue space displays an arguably transparent structure and gives a slightly more accurate insight. All in all $\mathrm{L}^1(\Omega; V)$ perturbations arise as the optimal solution and it really exposes the strenght of fractional estimates. This will be more evident later when carrying out the computations. The underlying ideas for our argumentation are inspired by papers of Beck and Schmidt \cite{Beck2013} as well as Seregin \cite{Seregin1993b, Seregin1994a} and Bildhauer \cite{Bildhauer2003}. Let us begin by recalling the relevant results and setting up the core constructions.
\begin{theorem}{\normalfont{(Ekeland Variational Principle)}}\label{eke}
Let $(X,d)$ be a complete metric space and let $F:X\rightarrow [-\infty,+\infty)$ be a bounded below functional which is lower semicontinuous on the metric topology. If for a fixed $\epsilon>0$ there is some $u \in X$
\begin{equation*}
F(u) \leq \inf_{X} F + \epsilon
\end{equation*}
holds, then on can find a $v \in X$ such that
\begin{enumerate}
\item $d(u, v) \leq \sqrt{\epsilon}$
\item $F(v) \leq F(u)$
\item $F(v) \leq F(u) + \sqrt{d(v, w)}$ for all $w \in X$.
\end{enumerate}
\end{theorem}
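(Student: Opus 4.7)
The plan is to derive the three conclusions from a standard partial-order argument. Define a relation $\preceq$ on $X$ by $y \preceq x$ if and only if $F(y) + \sqrt{\epsilon}\,d(x,y) \leq F(x)$. Reflexivity is immediate; antisymmetry follows by adding the two defining inequalities and using $d \geq 0$; transitivity comes from the triangle inequality, since if $z \preceq y \preceq x$ then adding the two defining inequalities and using $d(x,y) + d(y,z) \geq d(x,z)$ yields $F(z) + \sqrt{\epsilon}\,d(x,z) \leq F(x)$. Thus $\preceq$ is a genuine partial order. Moreover, by the lower semicontinuity of $F$ and the continuity of $d$, the sub-level set $S(x) := \{y \in X : y \preceq x\}$ is closed for every $x \in X$, and $\inf_{S(x)} F$ is finite because $F$ is bounded below on $X$.

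The point $v$ will be obtained as the limit of a decreasing chain. Set $u_0 := u$ and inductively pick $u_{n+1} \in S(u_n)$ with $F(u_{n+1}) \leq \inf_{S(u_n)} F + 2^{-(n+1)}$. Since $u_{n+1} \preceq u_n$ one has $\sqrt{\epsilon}\,d(u_n, u_{n+1}) \leq F(u_n) - F(u_{n+1})$, and by transitivity and telescoping
\begin{equation*}
\sqrt{\epsilon}\,d(u_n, u_m) \leq F(u_n) - F(u_m) \qquad \text{for all } m \geq n.
\end{equation*}
The sequence $F(u_n)$ is decreasing and bounded below, hence Cauchy in $\bR$; therefore $\{u_n\}$ is Cauchy in $X$ and, by completeness, converges to some $v \in X$. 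Since each $S(u_n)$ is closed and $u_m \in S(u_n)$ for $m \geq n$, we obtain $v \in S(u_n)$ for every $n$. Specialising to $n = 0$ yields $v \preceq u$, which gives (2) directly and, combined with the hypothesis $F(u) \leq \inf_X F + \epsilon$, produces $\sqrt{\epsilon}\,d(u,v) \leq F(u) - F(v) \leq \epsilon$, whence $d(u,v) \leq \sqrt{\epsilon}$, i.e. conclusion (1).

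For conclusion (3), which I read as $F(v) \leq F(w) + \sqrt{\epsilon}\,d(v,w)$ for all $w \in X$ (the displayed version appears to contain a typographical error), the strategy is to show that $v$ is minimal with respect to $\preceq$. Suppose some $w \in X$ satisfied $w \preceq v$; by transitivity $w \preceq u_n$ for every $n$, so $w \in S(u_n)$ and therefore $F(u_{n+1}) \leq F(w) + 2^{-(n+1)}$. Sending $n \to \infty$ and using $F(v) \leq \liminf_n F(u_n)$ from lower semicontinuity gives $F(v) \leq F(w)$. Combined with $w \preceq v$, which reads $F(w) + \sqrt{\epsilon}\,d(v,w) \leq F(v)$, this forces $d(v,w) = 0$ and thus $w = v$. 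Equivalently, no $w \neq v$ satisfies $F(w) + \sqrt{\epsilon}\,d(v,w) \leq F(v)$, which rearranges exactly to (3). The main subtlety is the interplay between the lower semicontinuity of $F$ and the telescoping distance bound: one must pass to limits carefully in $F$-values, where only $\liminf$ is available, while keeping firm control of the distance terms, and this is precisely what the closedness of $S(x)$ packages cleanly. A slicker alternative is to extract a minimal element in $(X, \preceq)$ directly via Zorn's lemma on a maximal chain below $u$, but the explicit Cauchy construction is preferable here because it uses completeness of $X$ rather than the axiom of choice.
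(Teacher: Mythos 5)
Your proof is correct: it is the standard Bishop--Phelps-type argument for Ekeland's variational principle via the partial order $y \preceq x \iff F(y) + \sqrt{\epsilon}\,d(x,y) \leq F(x)$, the closedness of the sections $S(x)$ from lower semicontinuity, and the Cauchy chain $\{u_n\}$ whose limit is a $\preceq$-minimal point below $u$. The paper itself gives no proof of Theorem \ref{eke} --- it is recalled as a classical result --- so there is nothing to compare against; your argument stands as a complete, self-contained proof. You were also right to read conclusion (3) as $F(v) \leq F(w) + \sqrt{\epsilon}\,d(v,w)$ for all $w \in X$: the displayed version in the statement (with $F(u)$ in place of $F(w)$ and $\sqrt{d(v,w)}$ in place of $\sqrt{\epsilon}\,d(v,w)$) is a typographical slip, and indeed it is the corrected form that the paper actually invokes later in (\ref{eke2}) with $\epsilon = 1/j^2$ and $\sqrt{\epsilon} = 1/j$. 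One minor remark: for the application in Section \ref{viscsection} the functional $\mathscr{F}_j$ takes the value $+\infty$ off $\mathscr{D}_j$, so the natural codomain is $(-\infty, +\infty]$ rather than $[-\infty, +\infty)$; your proof goes through verbatim in that setting since $F(u) \leq \inf_X F + \epsilon < \infty$ keeps every $F(u_n)$ finite.
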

\noindent
A matching Ekeland functional $F$ will stem from the following generalisation of the lemma in \cite{Gmeineder2019}:
\begin{proposition}\label{growth1}
Let $p>1$ and let $g: \bR^{n \times n} \rightarrow \bR$ be a convex function with p-growth:
\begin{equation}
c|z|^p - \theta \leq g(z) \leq C(1+|z|^p)
\end{equation}
for all $z \in\bR^{n \times n}$ and some constants $\theta, c, C >0$. For a given boundary datum $u_0 \in \mathrm{W}^{1, p}(\Omega; V)$, the integral functional
\begin{equation}
G[u] :=
\begin{cases}
\int_{\Omega}^{} g(\bA u) \hspace*{0.1cm}\mathrm{d}\mathscr{L}^n, \quad \text{if $ u \in u_0 + \mathrm{W}^{1, p}_0(\Omega; V)$}\\
+\infty, \quad \quad \quad \quad \quad \,  \text{if $u \in  \mathrm{L}^{1}(\Omega; V) \setminus  (u_0 + \mathrm{W}^{1, p}_0(\Omega; V))$}
\end{cases}
\end{equation}
is lower semicontinuous with respect to the norm topology on $\mathrm{L}^{1}$.
\end{proposition}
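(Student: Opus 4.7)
The plan is to establish lower semicontinuity by reducing to a weak convergence argument in $\mathrm{W}^{1,p}$ and then invoking the classical theorem that convex integral functionals are sequentially weakly lower semicontinuous. Fix a sequence $u_k \to u$ in $\mathrm{L}^1(\Omega; V)$ and write $\ell := \liminf_{k\to\infty} G[u_k]$. If $\ell = +\infty$ the statement is vacuous, so assume $\ell < \infty$ and pass to a subsequence (not relabelled) along which $G[u_k] \to \ell$ and every $G[u_k]$ is finite. By definition this forces $u_k \in u_0 + \mathrm{W}^{1,p}_0(\Omega; V)$ for every $k$.

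First I would extract a uniform $\mathrm{W}^{1,p}$-bound. The lower $p$-growth assumption $c|z|^p - \theta \leq g(z)$ gives $\|\bA u_k\|_{\mathrm{L}^p(\Omega)}^p \leq c^{-1}(G[u_k] + \theta\mathscr{L}^n(\Omega))$, hence $\bA u_k$ is bounded in $\mathrm{L}^p(\Omega; W)$. Consequently $\bA(u_k - u_0)$ is bounded in $\mathrm{L}^p$ as well. Since $u_k - u_0 \in \mathrm{W}^{1,p}_0(\Omega; V)$, the $\bC$-elliptic Korn--Poincar\'e estimate (Proposition \ref{Poinc}, combined with the fact that an element of $\ker(\bA)$ with vanishing trace on $\partial\Omega$ is identically zero by unique continuation for $\bC$-elliptic operators) yields
\begin{equation*}
\|u_k - u_0\|_{\mathrm{W}^{1,p}(\Omega)} \leq C\,\|\bA(u_k - u_0)\|_{\mathrm{L}^p(\Omega)},
\end{equation*}
so $\{u_k - u_0\}$ is bounded in $\mathrm{W}^{1,p}_0(\Omega; V)$.

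Next I would pass to a weak limit. By reflexivity ($p>1$) and the Banach--Alaoglu theorem a subsequence satisfies $u_k - u_0 \rightharpoonup w$ in $\mathrm{W}^{1,p}_0(\Omega; V)$; since $\mathrm{W}^{1,p}_0$ is convex and norm-closed, it is weakly closed, so $w \in \mathrm{W}^{1,p}_0(\Omega; V)$. Rellich--Kondrachov gives strong convergence $u_k \to u_0 + w$ in $\mathrm{L}^1(\Omega; V)$; uniqueness of $\mathrm{L}^1$-limits identifies $u = u_0 + w$, whence $u \in u_0 + \mathrm{W}^{1,p}_0(\Omega; V)$ and $\bA u_k \rightharpoonup \bA u$ weakly in $\mathrm{L}^p(\Omega; W)$.

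Finally, apply the classical weak lower semicontinuity theorem for convex integrands of $p$-growth (Ioffe--Olech): since $g$ is convex and continuous with $p$-growth, the functional $v \mapsto \int_\Omega g(v)\,\mathrm{d}x$ is sequentially weakly lower semicontinuous on $\mathrm{L}^p(\Omega; W)$, giving
\begin{equation*}
G[u] = \int_\Omega g(\bA u)\,\mathrm{d}x \leq \liminf_{k\to\infty} \int_\Omega g(\bA u_k)\,\mathrm{d}x = \ell.
\end{equation*}
The main technical point, and the only step that truly uses the $\bC$-ellipticity of $\bA$, is the Korn--Poincar\'e inequality on the zero-trace class; everything else is soft functional analysis. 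A subtlety worth noting in the write-up is that the argument is applied to a further subsequence of the one realising $\liminf_k G[u_k]$, but since every subsequence admits a sub-subsequence satisfying $G[u] \leq \liminf G[u_{k_j}]$, the final inequality transfers to the original $\liminf$.
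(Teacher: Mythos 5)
Your proposal is correct and follows essentially the same route as the paper's proof: reduce to a subsequence realising a finite $\liminf$, use the lower $p$-growth bound plus the Korn--Poincar\'e inequality of Proposition \ref{Poinc} to get a uniform $\mathrm{W}^{1,p}$ bound, extract a weak limit, identify it with $u$ via Rellich--Kondrachov, and conclude by classical weak lower semicontinuity of convex $p$-growth integrands. You are in fact slightly more careful than the paper on one point it glosses over, namely why the kernel corrector $a$ in Proposition \ref{Poinc} can be dispensed with on the zero-trace class $\mathrm{W}^{1,p}_0$.
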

\begin{proof}
Take a sequence $\{u_j\}$ in $\mathrm{L}^{1}$ such that $u_j \rightarrow u$  in the norm topology of $\mathrm{L}^{1}$. In the case of $\liminf_{j\rightarrow \infty}G[u_j] = \infty$ the property is clear. Hence it suffices to study the case when $\liminf_{j\rightarrow \infty}G[u_j] < \infty$ and find a subsequence $\{u_{j_k}\}$ with $\lim_{k\rightarrow \infty}G[u_{j_k}] = \liminf_{j\rightarrow \infty}G[u_j]$. The construction of $G$ forces $\{u_{j_k}\}$ to be in $u_0 + \mathrm{W}^{1, p}_0(\Omega; V)$. The $p$-growth of $g$ implies further that $\{\bA u_{j_k}\}$ is bounded in $\mathrm{L}^p(\Omega; V)$. Using Proposition \ref{Poinc} we conclude further that $\{u_{j_k}\}$ is bounded in $\mathrm{W}^{1, p}(\Omega; V)$ and hence passing to a subsequence $u_{j_{k_m}} \rightharpoonup v \in \mathrm{W}^{1, p}(\Omega; V)$. By Rellich-Kondrachov this convergence can be regarded strongly in $\mathrm{L}^p$. Now since $\mathrm{L}^p(\Omega; V) \hookrightarrow \mathrm{L}^{1}(\Omega; V)$ it follows that $u = v$. Finally by convexity, lower semicontinuity for $p$-growth integrands
\begin{equation*}
G[u] = G[v] \leq \liminf_{m\rightarrow \infty} G[u_{j_{k_m}}] = \liminf_{m\rightarrow \infty} G[u_{j_{k_m}}] =  \liminf_{k\rightarrow \infty} G[u_{j_k}]
\end{equation*} 
and this finishes the proof.
\end{proof}
\begin{lemma}\label{growth2}
Let $f$ be the integrand given in the introductory Section 2.1. Then for any $\alpha >0$ we can find $c_\alpha, C_{\alpha}, \theta>0$ such that 
\begin{equation*}
c_{\alpha}|z|^2- \theta \leq f(z) + \alpha|z|^2 \leq C_{\alpha}(1+|z|^2)
\end{equation*}
for all $z \in \bR^{n \times n}.$
\end{lemma}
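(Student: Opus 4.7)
The plan is to establish both inequalities directly from the linear growth assumption (2) in Section~\ref{intro}, together with two uses of Young's inequality; in fact convexity of $f$ plays no role and only the two-sided bound $|f(z)| \leq C(1+|z|)$ enters.

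For the upper bound I would apply the elementary estimate $|z| \leq \tfrac{1}{2}(1+|z|^2)$, valid for every $z$, to the linear growth hypothesis. This yields
\begin{equation*}
f(z) + \alpha|z|^2 \;\leq\; C + C|z| + \alpha|z|^2 \;\leq\; \tfrac{3C}{2} + \bigl(\tfrac{C}{2}+\alpha\bigr)|z|^2,
\end{equation*}
which gives the right-hand inequality with $C_\alpha := \max\{3C/2,\,C/2+\alpha\}$. No obstruction whatsoever.

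For the lower bound, linear growth also gives $f(z) \geq -C(1+|z|)$. I would then absorb the linear term into the quadratic by Young's inequality in its standard weighted form $C|z| \leq \tfrac{\alpha}{2}|z|^2 + \tfrac{C^2}{2\alpha}$. Combining, one obtains
\begin{equation*}
f(z) + \alpha|z|^2 \;\geq\; \tfrac{\alpha}{2}|z|^2 \,-\, \Bigl(C + \tfrac{C^2}{2\alpha}\Bigr),
\end{equation*}
which is the desired lower bound with $c_\alpha := \alpha/2$ and $\theta := C + C^2/(2\alpha)$.

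There is really no hard step here: the lemma is purely an arithmetic observation ensuring that the perturbation $f(\cdot)+\alpha|\cdot|^2$ enjoys genuine $2$-growth with a coercivity constant that degenerates as $\alpha \downarrow 0$. Its role further on is to license the application of Proposition~\ref{growth1} with $p=2$ to Dirichlet-energy-augmented functionals of the form $\mathscr{F}[u] + \alpha\|\bA u\|_{\mathrm{L}^2}^2$ arising in the vanishing-viscosity / Ekeland construction set up in Section~\ref{viscsection}.
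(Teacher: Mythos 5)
Your proof is correct, and it is exactly the elementary computation the paper has in mind: the paper's own proof of this lemma is simply the word ``Clear.'' Spelling out the two applications of Young's inequality as you do is the canonical way to fill that in, so there is nothing to compare or criticise.
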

\begin{proof}
Clear.
\end{proof}
\noindent
\underline{\textit{Construction of the viscosity approximation sequence}}:\\ 
We fix a generalised minimiser $u \in \mathrm{BV^{\bA}}(\Omega)$. Then by the trace preserving smooth approximation \cite{Breit2010} we can find a sequence $v_j \in \mathscr{D}_{u_0}$ such that $v_j \stackrel{\langle \cdot \rangle}{\longrightarrow} u$ area-strictly in $\mathrm{BV}^{\bA}(\Omega)$. Combining Theorem \ref{Reshtk} and the fact that $\mathscr{F}^*_{u_0}|_{ \mathscr{D}_{u_0}} = \mathscr{F}$ one concludes that $\lim_{j\rightarrow\infty} \mathscr{F}[v_j] = \mathscr{F}^*_{u_0}[u] = \inf_{\mathscr{D}_{u_0}} \mathscr{F}$. Extracting a subsequence and omitting relabelling
\begin{equation}
 \mathscr{F}[v_j] \leq \inf_{\mathscr{D}_{u_0}} \mathscr{F} + \frac{1}{8j^2}.
\end{equation} 
Due to the boundary values, the surjectivity of trace operator $\mathrm{Tr}$ by mollification we find a $v^{\partial\Omega}_j \in \mathrm{W}^{1,2}(\Omega; V)$ for each $j$ such that
\begin{equation}
\|v^{\partial\Omega}_j - u_0\|_{\mathrm{W}^{\bA,1}} \leq \frac{1}{8 \mathrm{Lip}(f)j^2}.
\end{equation}
Setting $\mathscr{D}_j := v^{\partial\Omega}_j + \mathrm{W}^{1,2}_0(\Omega; V)$ we further deduce that there exists $\tilde{u_j} \in \mathscr{D}_j$ with
\begin{equation}
\|v_j - u_0 - (\tilde{u_j}-v^{\partial\Omega}_j)\|_{\mathrm{W}^{\bA, 1}} \leq \frac{1}{8 \mathrm{Lip}(f)j^2}
\end{equation}
on the account that $v_j - u_0 \in \mathrm{W}^{\bA, 1}_0(\Omega)$. In summary it yields
\begin{equation}\label{Lipbound}
\|v_j - \tilde{u_j}\|_{\mathrm{W}^{\bA, 1}} \leq \frac{1}{4 \mathrm{Lip}(f)j^2}	
\end{equation}
for all $j \in \bN$. By considering the smooth approximation of $u_0 + \mathrm{W}^{1,2}_0(\Omega; V)$ in $\mathscr{D}_{u_0}$ and the above array of inequalities we perform the following computation: for any $\phi \in \mathrm{W}^{1,2}_0(\Omega; V)$
\begin{equation*}
\begin{aligned}
\inf \mathscr{F}[\mathscr{D}_{u_0}] &\leq \mathscr{F}[u_0 + \phi] = \mathscr{F}[u_0 + \phi] -\mathscr{F}[v^{\partial\Omega}_j + \phi] + \mathscr{F}[v^{\partial\Omega}_j + \phi]\\
&\leq \mathrm{Lip}(f)\|\bA(u_0 - v^{\partial\Omega}_j)\|_{\mathrm{L}^1(\Omega)} + \mathscr{F}[v^{\partial\Omega}_j + \phi] \leq\frac{1}{8j^2} + \mathscr{F}[v^{\partial\Omega}_j + \phi]\\
\Rightarrow \hspace*{1cm} & \inf_{\mathscr{D}_{u_0}} \mathscr{F} \leq \frac{1}{8j^2} + \inf_{\mathscr{D}_{j}} \mathscr{F}
\end{aligned}
\end{equation*}
by taking infimum over all such $\phi$. Similarly using the same argument gives
\begin{equation}\label{inf F}
\mathscr{F}[\tilde{u_j}] \leq \inf_{\mathscr{D}_{u_j}} \mathscr{F} + \frac{1}{2j^2}.
\end{equation}
Let us now define the requisite functionals to be involved in the implementation of the Ekeland principle: for $j \in \bN$ define
\begin{equation}\label{f_j}
f_j(\xi) := f(\xi) + \frac{1}{2j^2\mathrm{A}_j}(1+|\xi|^2) \hspace*{1cm} \text{where} \hspace*{1cm} \mathrm{A}_j := 1 + \int_{\Omega}^{} (1 + |\bA(\tilde{u_j})|^2)  \hspace*{0.1cm}\mathrm{d} x.
\end{equation}
Moreover we extend the integral functionals to the space $\mathrm{L}^{1}(\Omega; V)$:
\begin{equation}\label{F_j}
\mathscr{F}_j[u] =
\begin{cases}
\int_{\Omega}^{} f_j(\bA u) \hspace*{0.1cm}\mathrm{d} x, \quad \text{if $ u \in \mathscr{D}_j$}\\
+\infty, \quad \quad \quad \quad \quad \,  \text{if $u \in  \mathrm{L}^{1}(\Omega; V) \setminus  \mathscr{D}_j$}.
\end{cases} 
\end{equation}
By construction $\mathscr{F}_j$ outside $\mathscr{D}_j$ hence in order to check lower semicontinuity with respect to the norm topology on $\mathrm{L}^{1}$ it suffices to focus on the subdomain $\mathscr{D}_j$. This however is readily verified having Proposition \ref{growth1} and Lemma \ref{growth2} at hand for special cases $p = 2$ and $g = f_j$.\\
Invoking the Ekeland Principle Theorem \ref{eke} yields a sequence $\{u_j\}$ in $\mathrm{L}^{1}$ that satisfies:
\begin{equation}\label{eke2}
\|u_j - \tilde{u_j}\|_{\mathrm{L}^{1}} \leq \frac{1}{j}, \hspace*{1cm} \mathscr{F}_j[u_j] \leq \mathscr{F}_j[w] + \frac{1}{j}\|u_j - w\|_{\mathrm{L}^{1}}
\end{equation}
and this holds for all $w \in \mathrm{L}^{1}(\Omega; V)$.\\
\newline
\underline{\textit{We claim that $\{u_j\}$ is uniformly bounded in $\mathrm{W}^{\bA,1}(\Omega)$}}. \label{unifW}($\ast$)\\
Indeed by regarding the two inequalities above together with the linear growth of $f$, because of the embedding $\mathrm{W}^{1,2}(\Omega; V) \hookrightarrow \mathrm{L}^{1}(\Omega; V)$ there is a constant $\Theta > 0$ for which we get   
\begin{equation*}
\begin{aligned}
\int_{\Omega}^{} |\bA u_j| \hspace*{0.1cm}\mathrm{d}\mathscr{L}^n &\leq \mathscr{F}[u_j] + \Theta \leq \mathscr{F}_j[u_j] + \Theta \leq \mathscr{F}_j[u_j] \leq \mathscr{F}_j[w] + \frac{1}{j}\|u_j - \tilde{u_j}\|_{\mathrm{L}^{1}} + \Theta\\
& \leq \inf \mathscr{F}_j[\mathrm{L}^{1}(\Omega; V)] + \frac{2}{j^2} + \Theta < \infty
\end{aligned}
\end{equation*}
where in the penultimate inequality we have used equation (\ref{inf F}) and that $\mathscr{F}_j$ is bounded below by construction.\\
In order to justify regularity of minimisers it is customary to test the induced Euler-Lagrange equation with a suitable test map. The arising circumstances allow the following variant:
\begin{lemma}\normalfont{{(Perturbed Euler-Lagrange equation)}}
For all $j \in \bN$ and all  $\phi \in \mathrm{W}^{1,2}_0(\Omega; V)$ it holds that
\begin{equation}\label{E-L}
\Big| \int_{\Omega}^{} \langle \nabla f_j(\bA u_j), \bA \phi \rangle \hspace*{0.1cm}\mathrm{d}\mathscr{L}^n \Big| \leq \frac{1}{j} \|\phi\|_{\mathrm{L}^{1}}.
\end{equation}
\end{lemma}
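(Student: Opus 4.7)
The plan is to derive the perturbed Euler--Lagrange equation directly from the Ekeland quasiminimality property \eqref{eke2} by performing a standard one-sided variation with test maps $u_j \pm t\phi$. Since $\phi \in \mathrm{W}^{1,2}_0(\Omega; V)$, the perturbation $u_j + t\phi$ lies in $\mathscr{D}_j = v^{\partial\Omega}_j + \mathrm{W}^{1,2}_0(\Omega; V)$ for every $t \in \bR$, so $\mathscr{F}_j[u_j + t\phi]$ is represented by the integral in the first branch of \eqref{F_j} rather than being $+\infty$. Note also that the Ekeland bound forces $\mathscr{F}_j[u_j]<\infty$, hence $u_j \in \mathscr{D}_j$ as well, and in particular $\bA u_j \in \mathrm{L}^2(\Omega;W)$.

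First I would insert $w = u_j + t\phi$ with $t > 0$ into the second relation of \eqref{eke2} to obtain
\begin{equation*}
\frac{\mathscr{F}_j[u_j + t\phi] - \mathscr{F}_j[u_j]}{t} \;\geq\; -\frac{1}{j}\|\phi\|_{\mathrm{L}^1},
\end{equation*}
and then let $t \to 0^+$. Using the fundamental theorem of calculus the integrand on the left reads
\begin{equation*}
\frac{f_j(\bA u_j + t\bA\phi) - f_j(\bA u_j)}{t} \;=\; \int_0^1 \langle \nabla f_j(\bA u_j + st\bA\phi), \bA\phi \rangle \,\mathrm{d} s,
\end{equation*}
and the pointwise limit as $t\to 0^+$ is $\langle \nabla f_j(\bA u_j), \bA\phi\rangle$.

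The step that requires verification is the exchange of limit and integral, but the growth of $f_j$ makes this routine. Since $f$ is convex and of linear growth, $|\nabla f|$ is globally bounded by some constant depending only on $\mathrm{Lip}(f)$, while the quadratic perturbation contributes a term $|\xi|/(j^2 A_j)$ to $|\nabla f_j(\xi)|$. Hence for $|t|\leq 1$ we have the domination
\begin{equation*}
|\langle \nabla f_j(\bA u_j + st\bA\phi), \bA\phi\rangle|
\;\leq\; \Bigl(\mathrm{Lip}(f) + \tfrac{1}{j^2 A_j}\bigl(|\bA u_j| + |\bA\phi|\bigr)\Bigr)|\bA\phi|,
\end{equation*}
and the right-hand side is in $\mathrm{L}^1(\Omega)$ by Cauchy--Schwarz, as $\bA u_j, \bA\phi \in \mathrm{L}^2(\Omega;W)$. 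Dominated convergence, applied first under the $s$-integral via Fubini and then in $t$, yields
\begin{equation*}
\int_{\Omega} \langle \nabla f_j(\bA u_j), \bA\phi\rangle \,\mathrm{d}\mathscr{L}^n \;\geq\; -\frac{1}{j}\|\phi\|_{\mathrm{L}^1}.
\end{equation*}

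Finally, repeating the argument with $-\phi$ in place of $\phi$ (which is also in $\mathrm{W}^{1,2}_0(\Omega;V)$) produces the matching upper bound, and combining the two gives the claimed two-sided estimate \eqref{E-L}. The only delicate point is the dominated convergence justification, but it is effectively trivial here because $f_j$ was introduced precisely to provide the extra $\mathrm{L}^2$-structure on $\bA u_j$; this is one of the motivations for the quadratic regularisation $\tfrac{1}{2j^2 A_j}(1+|\xi|^2)$ in \eqref{f_j}.
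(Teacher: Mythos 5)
Your proposal is correct and follows essentially the same route as the paper: test the Ekeland inequality \eqref{eke2} with $u_j \pm t\phi$, note these remain in $\mathscr{D}_j$, and pass to the limit $t \to 0^+$ in the difference quotient. The paper simply states "letting $\epsilon \to 0$ yields \eqref{E-L}" where you spell out the dominated convergence justification (boundedness of $\nabla f$ from linear growth plus the $\mathrm{L}^2$-structure supplied by the quadratic regularisation), which is a welcome but not substantively different elaboration.
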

\begin{proof}
Fix $\epsilon>0$. Then testing the inequality in (\ref{eke2}) with $u_j \pm \epsilon \phi$ so that  
\begin{equation*}
\begin{aligned}
\mathscr{F}_j[u_j] - \mathscr{F}_j[u_j \pm \epsilon \phi] &\leq \frac{\epsilon}{j} \|\phi\|_{\mathrm{L}^{1}}\\
\Rightarrow \hspace*{1cm} \frac{\mathscr{F}_j[u_j] - \mathscr{F}_j[u_j \pm \epsilon \phi]}{\epsilon} &\leq \frac{1}{j} \|\phi\|_{\mathrm{L}^{1}}.
\end{aligned}
\end{equation*}
Notice that $u_j \pm \epsilon \phi \in \mathscr{D}_j$ as a consequence of $\{u_j\}$ being bounded in $\mathrm{W}^{\bA,1}(\Omega)$ and (\ref{F_j}). Now recalling the definition of $\mathscr{F}_j$ and letting $\epsilon \rightarrow 0$ yields (\ref{E-L}).
\end{proof}
\subsection{Proof of Theorem \ref{thmreg}}
In the upcoming section the procedure will follow the lines of \cite{Gmeineder2019} with sufficient modifications. We fix $x_0 \in \Omega$ and let $t > 0$ to be determined later. Further consider $1<a<1 + \frac{1}{n}$ and a $u \in \mathrm{BV^{\bA}}(\Omega)$ a generalised minimiser of the variational problem (\ref{Dirprob}). Let $\{u_j\}$ be the associated Ekeland sequence contemplated in the subsection devoted to viscosity sequence. Consider a cut-off map $\eta \in C^{\infty}_c(B_R(x_0))$ such that $\chi_{B_r(x_0)} \leq \eta \leq \chi_{B_R(x_0)}$ and $|\nabla \eta| \leq \frac{2}{R-r}$ where $0<r<R<\mathrm{dist}(x_0, \partial \Omega)$. We now take $\tilde{\Omega}$ a connected component of $x_0$ in $\Omega$ in order to find ourselves in the assertions of Proposition \ref{Poinc} and subsequently come up, for each $j \in \bN$, with an element $a_j \in \ker(\bA)$ satisfying
\begin{equation}\label{BoundOnPsi}
\|u_j - a_j\|_{\mathrm{L}^1(\tilde{\Omega})} \leq C \| \bA u_j \|_{\mathrm{L}^1(\tilde{\Omega})}, \hspace*{1cm} \|\nabla(u_j - a_j)\|_{\mathrm{L}^2(\tilde{\Omega})} \leq C \| \bA u_j \|_{\mathrm{L}^2(\tilde{\Omega})}.
\end{equation}
At this moment we shall conduct an argument that constitutes the essential part of the entire regularity proof. To begin with, we test the induced perturbed Euler-Lagrange equation (\ref{E-L}) with $\phi := \tau^+_{i,h}(\eta^2\tau^-_{i,h}(u_j-a_j)) = \tau^+_{i,h}(\eta^2\tau^-_{i,h}(\psi_j))$ where $|h| < \mathrm{dist}(\partial B_R(x_0), \tilde{\Omega} )$ and by construction  this is an admissible map. Thus 
\begin{equation}\label{step1}
\Big| \int_{\Omega}^{} \langle \nabla f_j(\bA u_j), \tau^+_{i,h}(\bA(\eta^2\tau^-_{i,h}(\psi_j)) \rangle \hspace*{0.1cm}\mathrm{d}\mathscr{L}^n \Big| \leq \frac{1}{j} \|\tau^+_{i,h}(\eta^2\tau^-_{i,h}(\psi_j)\|_{\mathrm{L}^{1}}
\end{equation}
and integrating parts the left hand side of the above equation combined with the product rule (\ref{prdctrul}) for $\bA$ this reads
\begin{equation*}
\begin{aligned}
&\int_{\Omega}^{} \langle \tau^+_{i, h}\nabla f_j(\bA u_j), \eta^2\tau^-_{i,h}(\bA u_j)) \rangle \hspace*{0.1cm}\mathrm{d}\mathscr{L}^n\\ 
&\leq \Big|\int_{\Omega}^{} \langle \tau^+_{i, h}\nabla f_j(\bA u_j), 2\eta\bA \eta \otimes_{\bA}\tau^-_{i,h}(\psi_j) \rangle \hspace*{0.1cm}\mathrm{d}\mathscr{L}^n \Big| + \frac{1}{j} \|\tau^+_{i,h}(\eta^2\tau^-_{i,h}(\psi_j))\|_{\mathrm{W}^{-2,1}}\\
&\leq \Big|\int_{\Omega}^{} \langle \tau^+_{i, h}\nabla f(\bA u_j), 2\eta\bA \eta \otimes_{\bA}\tau^-_{i,h}(\psi_j) \rangle \hspace*{0.1cm}\mathrm{d}\mathscr{L}^n \Big| + \frac{1}{j^2\mathrm{A}_j}\Big|\int_{\Omega}^{} \langle \tau^+_{i, h}\bA u_j, 2\eta\bA \eta \otimes_{\bA}\tau^-_{i,h}(\psi_j) \rangle \hspace*{0.1cm}\mathrm{d}\mathscr{L}^n \Big|\\
&+ \frac{1}{j} \|\tau^+_{i,h}(\eta^2\tau^-_{i,h}(\psi_j))\|_{\mathrm{L}^{1}}
\end{aligned}
\end{equation*}
where in the last inequality the definition of $f_j$ (\ref{f_j}) is being used. We will henceforth refer to the terms involved in the derived inequality by
\begin{equation}
\mathbf{I} \leq \mathbf{II} + \mathbf{III} + \mathbf{IV}.
\end{equation}
Consequently our effort are going to focus on estimating the respective terms and we shall do so one by one outlined in several steps:\\
\underline{Step 1: bound on $\mathbf{II}$}: as $\|\nabla f\|_{\mathrm{L}^{\infty}(\Omega)} \leq C$ by the initial assumptions, for any $0 < s < 1$:
\begin{equation}
\begin{aligned}
\mathbf{II} &\leq C \int_{B_R(x_0)}^{} |\tau_{i,h} \psi_j| \hspace*{0.1cm}\mathrm{d} x \leq C_sh^s \int_{B_R(x_0)}^{} \frac{|\tau_{i,h}\psi_j|}{h^s} \hspace*{0.1cm}\mathrm{d} x\\ 
&\leq C_sh^s|\psi_j|_{\mathrm{B}^s_{1,\infty}} \leq C_sh^s \|\psi_j\|_{\mathrm{W}^{\bA, 1}(\Omega)}\\  
&= C_sh^s \|u_j\|_{\mathrm{W}^{\bA, 1}(\Omega)} \leq C_sh^s\hspace*{3cm} \text{(by Theorem \ref{embb})}\\ 
\end{aligned}
\end{equation}
where have utilised the fact that $\bA \psi_j = \bA(u_j-a_j) = \bA u_j$ since $a_j \in N(\bA)$, Proposition \ref{Poinc} and the uniform boundedness \hyperref[unifW]{($\ast$)} in the last step. Here the absorbing constant $C>0$ only depends on the ball $B_R(x_0)$. \\
\underline{Step 2: bound on $\mathbf{III}$}: Firstly we employ Young's inequality and subsequently proceed in a similar fashion as in Step 1. For this we take $\delta < 1$ so that
\begin{equation}\label{termIII}
\begin{aligned}
\mathbf{III} &\leq \frac{\delta}{\mathrm{A}_jj^2} \int_{\Omega}^{} |\tau_{i,h} \bA u_j|^2 \hspace*{0.1cm}\mathrm{d} x + \frac{M_\delta}{\mathrm{A}_jj^2} \int_{\Omega}^{} |\nabla \eta \otimes_{\bA} \tau_{i,h}\psi_j|^2 \hspace*{0.1cm}\mathrm{d} x\\
&= \frac{\delta}{\mathrm{A}_jj^2} \int_{\Omega}^{} |\tau_{i,h} \bA u_j|^2 \hspace*{0.1cm}\mathrm{d} x + \frac{M_{\delta,\eta}h^2}{\mathrm{A}_jj^2} \int_{\Omega}^{} |\Delta_{i,h}\psi_j|^2 \hspace*{0.1cm}\mathrm{d} x\\
&=\frac{\delta}{\mathrm{A}_jj^2} \int_{\Omega}^{} |\tau_{i,h} \bA u_j|^2 \hspace*{0.1cm}\mathrm{d} x + \frac{M_{\delta,\eta,\tilde{\Omega}}h^2}{\mathrm{A}_jj^2} \int_{\Omega}^{} |\nabla\psi_j|^2 \hspace*{0.1cm}\mathrm{d} x\\
&\leq\frac{\delta}{\mathrm{A}_jj^2} \int_{\Omega}^{} |\tau_{i,h} \bA u_j|^2 \hspace*{0.1cm}\mathrm{d} x + \frac{M_{\delta,\eta,\tilde{\Omega}}h^2}{\mathrm{A}_jj^2} \int_{\Omega}^{} |\bA u_j|^2 \hspace*{0.1cm}\mathrm{d} x \hspace*{2cm} \text{(by Proposition \ref{Poinc})}\\
&\leq \frac{\delta}{\mathrm{A}_jj^2} \int_{\Omega}^{} |\tau_{i,h} \bA u_j|^2 \hspace*{0.1cm}\mathrm{d} x + \frac{M_{\delta, \eta, \tilde{\Omega}}h^2}{j^2} =: \mathbf{III}' + \frac{M_{\delta, \eta, \tilde{\Omega}}h^2}{j^2} \hspace*{1cm} \text{by (\ref{f_j}) and (\ref{unifW})}.
\end{aligned}
\end{equation}
\underline{Step 3: bounds on $\mathbf{IV}$}: For this term we argue similarly as for $\mathbf{II}$:
\begin{equation}\label{termIV}
\begin{aligned}
\mathbf{IV} &= \frac{h^s}{j} \Big\|\frac{\tau^+_{i,h}(\eta^2\tau^-_{i,h}(\psi_j))}{h^s}\Big\|_{\mathrm{L}^{1}(\tilde{\Omega})} \leq  \frac{h^s}{j} \Big\|\frac{\tau^+_{i,h}(\eta^2\tau^-_{i,h}(\psi_j))}{h^s}\Big\|_{\mathrm{L^1}(\tilde{\Omega})} \leq \frac{h^s}{j}|\eta^2\tau^-_{i,h}(\psi_j)|_{\mathrm{B}^s_{1,\infty}}\\
&\leq\frac{h^s}{j}C\|\eta^2\tau^-_{i,h}(\psi_j)\|_{\mathrm{W}^{\bA, 1}(\tilde{\Omega})}
\leq \frac{h^s}{j}C \|u_j\|_{\mathrm{W}^{\bA, 1}(\Omega)} \leq \frac{h^s}{j}C
\end{aligned}
\end{equation}
where the second inequality is obtained by recalling Theorem \ref{embb} and the last two by invoking (\ref{BoundOnPsi}) as well as the uniform bound \hyperref[unifW]{($\ast$)}.\\
\underline{Step 4: bounds on $\mathbf{I}$}: We estimate this term from below in the following way. 
\begin{equation}
\begin{aligned}
\mathbf{I} &= \int_{\Omega}^{} \langle \tau^+_{i, h}\nabla f_j(\bA u_j), \eta^2\tau^-_{i,h}(\bA u_j)) \rangle \hspace*{0.1cm}\mathrm{d} x = \int_{\Omega}  \Big\langle\int_{0}^{1} \frac{\mathrm{d}}{\mathrm{d} t}f_j(\bA u_j + t\tau_{i,h}\bA u_j)\hspace*{0.1cm}\mathrm{d} t, \eta^2\tau_{i,h} \bA u_j \Big\rangle  \hspace*{0.1cm}\mathrm{d} x\\
&\geq \int_{\Omega} \int_{0}^{1} \langle \nabla^2f(\bA u_j + t\tau_{i,h}\bA u_j)\eta\tau_{i,h} \bA u_j, \eta\tau_{i,h} \bA u_j \rangle \hspace*{0.1cm}\mathrm{d} t \hspace*{0.1cm}\mathrm{d} x + \frac{1}{\mathrm{A}_jj^2}\int_{\Omega}^{} |\eta\tau_{i,h} \bA u_j|^2 \hspace*{0.1cm}\mathrm{d} x\\
& \geq c_1 \int_{\Omega} \int_{0}^{1} \frac{|\tau_{i,h}\bA u_j(x)|^2}{(1+|\bA u_j +t\tau_{i,h} \bA u_j(x)|^2)^{\frac{a}{2}}} \hspace*{0.1cm}\mathrm{d} t \hspace*{0.1cm}\mathrm{d} x + \frac{1}{\mathrm{A}_jj^2}\int_{\Omega}^{} |\eta\tau_{i,h} \bA u_j|^2 \hspace*{0.1cm}\mathrm{d} x  \hspace*{0.5cm} \text{by $a$-$ellipticity$}\\
&\geq c_1 \int_{\Omega} \int_{0}^{1} \frac{|\tau_{i,h}\bA u_j(x)|^2}{(1+|\bA u_j|^2 +t\tau_{i,h} \bA u_j(x)|^2)^{\frac{a}{2}}} \hspace*{0.1cm}\mathrm{d} t \hspace*{0.1cm}\mathrm{d} x + \frac{1}{\mathrm{A}_jj^2}\int_{\Omega}^{} |\eta\tau_{i,h} \bA u_j|^2 \hspace*{0.1cm}\mathrm{d} x.
\end{aligned}
\end{equation}
Observe that for $|t|<1$ it holds that $\sqrt{(1+|x+ty|^2)} \leq C \sqrt{(1+|x|^2+|y|^2)}$ for any $x,y \in \bR^n$. Therefore additionally recalling the properties of V-function (\ref{V4}) and composing it into the above inequality ultimately gives
\begin{equation}
\begin{aligned}
\mathbf{I} & \geq \int_{\Omega} \frac{|\tau_{i,h}\bA u_j(x)|^2}{(1+|\bA u_j(x+he_i)|^2 + |\bA u_j(x)|^2)^{\frac{a}{2}}} \hspace*{0.1cm}\mathrm{d} x + \frac{1}{\mathrm{A}_jj^2}\int_{\Omega}^{} |\eta\tau_{i,h} \bA u_j|^2 \hspace*{0.1cm}\mathrm{d} x\\
&\geq \tilde{c} \int_{\Omega} \frac{|\tau_{i,h}V_t(\bA u_j(x))|^2}{(1+|\bA u_j(x+he_i)|^2 + |\bA u_j(x)|^2)^{\frac{2(1-t) + a}{2}}} \hspace*{0.1cm}\mathrm{d} x + \frac{1}{\mathrm{A}_jj^2}\int_{\Omega}^{} |\eta\tau_{i,h} \bA u_j|^2 \hspace*{0.1cm}\mathrm{d} x\\
& =: \mathbf{I}_1 + \mathbf{I}_2.
\end{aligned}
\end{equation}
Let briefly induce some notation to simply the upcoming calculation and call the denominator
\begin{equation}
\sigma_{j,h,i}(x) := \frac{1}{(1+|\bA u_j(x+he_i)|^2 + |\bA u_j(x)|^2)^{\frac{2(1-t) + a}{2}}}.
\end{equation}
To summarize the outcome thus far, by fusing the estimates from Steps 1 to 4
together
\begin{equation*}
\mathbf{I}_1 + \mathbf{I}_2 \leq C\frac{h}{j} + \mathbf{III}' + \frac{M_{\delta, \eta, \tilde{\Omega}}h^2}{j^2} + C_sh^s
\end{equation*}
with the constant $C>0$ inheriting the ones from Steps 1 and 3. By the choice of $\delta > 0$ we may absorb $\mathbf{III}'$ into $\mathbf{I}_2$ which in other words means that
\begin{equation}
\tilde{c} \int_{\Omega} |\tau_{i,h}V_t(\bA u_j(x))|^2 \sigma_{j,h,i}(x) \hspace*{0.1cm}\mathrm{d} x + \frac{1-\delta}{\mathrm{A}_jj^2}\int_{\Omega}^{} |\eta\tau_{i,h} \bA u_j|^2 \hspace*{0.1cm}\mathrm{d} x \leq C\frac{h}{j} + \frac{M_{\delta, \eta, \tilde{\Omega}}h^2}{j^2} + C_sh^s.
\end{equation}
By dividing the first term on the left hand side by $h^s$ we deduce 
\begin{equation}
\sup_{j \in \bN} \int_{\Omega}^{} \frac{|\tau_{i,h}V_t(\bA u_j(x))|^2}{h^{\frac{s}{2}}} \sigma_{j,h,i}(x) \hspace*{0.1cm}\mathrm{d} x < \infty.
\end{equation} 
Further using the Young's inequality we arrive at the following expression
\begin{equation}
\begin{aligned}
\int_{\Omega}^{} \frac{|\tau_{i,h}V_t(\bA u_j(x))|}{h^{\frac{s}{2}}} \hspace*{0.1cm}\mathrm{d} x &= \int_{\Omega}^{} \frac{|\tau_{i,h}V_t(\bA u_j(x))|^2}{h^{\frac{s}{2}}} \frac{\sqrt{\sigma_{j,h,i}(x)}}{\sqrt{\sigma_{j,h,i}(x)}} \hspace*{0.1cm}\mathrm{d} x\\ 
&\leq \int_{\Omega}^{} \frac{|\tau_{i,h}V_t(\bA u_j(x))|^2}{|h^s|} \sigma_{j,h,i}(x) \hspace*{0.1cm}\mathrm{d} x + \int_{\Omega}^{} \frac{1}{\sigma_{j,h,i}(x)} \hspace*{0.1cm}\mathrm{d} x.
\end{aligned} 
\end{equation}
Now if we impose that $a +2(1-t) \leq 1$ and recall the uniform bound \hyperref[unifW]{($\ast$)}, we notice that the left hand side of the above inequality is uniformly bounded for $\forall j \in \bN$. On the other hand by the embedding $\mathrm{B}^{\frac{s}{2}}_{1,\infty}(\Omega) \hookrightarrow \mathrm{L}^{\frac{2n}{2n-s}-\epsilon}(\Omega; V)$ (\ref{embbBes}) for $0 < \epsilon < \frac{2n}{2n-s}$ it implies that for any $B \Subset \Omega$
\begin{equation}
\begin{aligned}
\sup_{j \in \bN} \int_{B} |\bA u_j|^{(2-t)(\frac{2n}{2n-s}-\epsilon)} \hspace*{0.1cm} \mathrm{d} x &\leq \sup_{j \in \bN} \int_{\Omega} |\tau_{i,h}V_t(\bA u_j(x))|^{\frac{2n}{2n-s}-\epsilon} \hspace*{0.1cm} \mathrm{d} x  + \mathscr{L}^n(\Omega) \hspace*{0.4cm} \text{(property (\ref{V3}))}\\
& \leq \int_{\Omega}^{} \frac{|\tau_{i,h}V_t(\bA u_j(x))|}{h^{\frac{s}{2}}} \hspace*{0.1cm}\mathrm{d} x + \mathscr{L}^n(\Omega) < \infty \hspace*{1cm} \text{(Besov embedding)}.
\end{aligned}
\end{equation}
Now we choose $t,s,\epsilon > 0$ such that $(2-t)(\frac{2n}{2n-s}-\epsilon)>1$ and set $p := (2-t)(\frac{2n}{2n-s}-\epsilon)$. Notice that this is possible since we can always find $\tilde{t}>0$ with $(2-\tilde{t})\frac{2n}{2n-1}>1$. In summary
\begin{equation}
\sup_{j \in \bN} \int_{B} |\bA u_j|^p \hspace*{0.1cm} \mathrm{d} x < \infty.
\end{equation}
Finally in view of the compactness result (\ref{compactn}) and the bound (\ref{unifW}) up to a subsequence $u_j \rightharpoonup^{*} w$ in $\mathrm{BV}^{\bA}$ for some $w \in \mathrm{BV}^{\bA}(\Omega)$ and therefore it is also the case that $u_j \rightarrow w$ in $\mathrm{L}^{1}$. Simultaneously as $\{u_j\}$ is the viscosity approximating sequence invoking (\ref{Lipbound}) as well as (\ref{eke2}) imply $u = w$. On the other hand by adhering to Poincar\'e inequality (\ref{Poinc}), $(u_j-a_j)|_B$ is bounded uniformly in $\mathrm{W}^{1, p}(B; V)$. As $\ker(\bA)$ is finite-dimensional this implies that $\{a_j\}$ are uniformly bounded in $\mathrm{W}^{1, p}(B; V)$. Hence in particular by reflexivity of the very space it holds that $u_j \rightharpoonup v$ in $\mathrm{W}^{1, p}(B; V)$ for some $v \in \mathrm{W}^{1, p}(B; V)$. Thus combining the two limits the equality holds $u|_B = v$ and the proof is complete.
\newline
\qed

\Addresses

\end{document}